\documentclass[11pt,a4paper]{article}

\usepackage[utf8]{inputenc}
\usepackage[T1]{fontenc}
\usepackage{lmodern}
\usepackage[english]{babel}
\usepackage[a4paper,margin=28mm,headheight=14pt]{geometry}
\usepackage{amsmath,amssymb,amsthm,mathtools}
\usepackage{microtype}
\usepackage{enumitem}
\usepackage{cite}
\usepackage[hidelinks]{hyperref}
\usepackage{bookmark}
\hypersetup{
  pdftitle={Finite-radius Jensen and relative-entropy formulations of the Balazard--Saias--Yor criterion},
  pdfauthor={Henning Wunderlich},
  pdfsubject={Finite-radius Jensen formulas and a relative-entropy reformulation of a Balazard--Saias--Yor criterion},
  pdfkeywords={Riemann hypothesis, Jensen formula, Hardy space, trivial zeros, Balazard--Saias--Yor criterion, relative entropy, data processing}
}

\newtheorem{theorem}{Theorem}[section]
\newtheorem{proposition}[theorem]{Proposition}
\newtheorem{lemma}[theorem]{Lemma}
\newtheorem{corollary}[theorem]{Corollary}
\theoremstyle{definition}

\theoremstyle{remark}
\newtheorem{remark}[theorem]{Remark}
\numberwithin{equation}{section}

\newcommand{\D}{\mathbb D}
\newcommand{\R}{\mathbb R}
\newcommand{\C}{\mathbb C}
\newcommand{\dd}{\,\mathrm d}
\newcommand{\RH}{\mathrm{RH}}
\DeclareMathOperator{\Rea}{Re}
\DeclareMathOperator{\Ima}{Im}
\DeclareMathOperator{\card}{card}
\DeclareMathOperator{\Log}{Log}
\newcommand{\mean}[1]{\frac{1}{2\pi}\int_0^{2\pi}#1\dd\theta}
\newcommand{\zetahalf}{\zeta\!\left(\frac12+it\right)}

\title{Finite-radius Jensen and relative-entropy formulations of the\protect\\
Balazard--Saias--Yor criterion}
\author{Henning Wunderlich\thanks{Offenbach am Main, Germany.
Email: \href{mailto:henningwunderlich@t-online.de}{\texttt{henningwunderlich@t-online.de}}.}}
\date{\today}

\begin{document}
\maketitle

\begin{abstract}
We give two explicit reformulations of a Balazard--Saias--Yor type criterion
for the Riemann hypothesis.  First, Jensen's formula is applied to
$u(s)=(s-1)\zeta(s)$ after mapping the unit disk onto $\Rea s<1/2$.
For the normalization whose center corresponds to $s=-1/2$, the images
$z_n=(4n-1)/(4n+3)$ of the trivial zeros yield a telescoping Jensen product.
This isolates, at finite radius, the complete divergent contribution of the
trivial zeros; the remaining defect is a nonnegative integrated count of
mapped nontrivial zeros off the critical line.  A one-parameter version gives
a gamma-quotient product and an explicit finite part.

Second, after passing through the established boundary identity, we combine
two Poisson weights to obtain an absolutely convergent logarithmic integral
of the Riemann $\Xi$-function.  From the random variable
$Z=|\Xi(T)/\Xi(0)|^2$ we define its law $\mu$ and the size-biased law
$\nu(\dd z)=z\mu(\dd z)/M$.  The data-processing inequality is lossless for
this statistic, and the Riemann hypothesis is equivalent to saturation of an
explicit upper bound for $D_{\rm KL}(\mu\|\nu)$.  We also record the associated
Donsker--Varadhan and moment forms and express the target constant through
Riemann's positive Fourier kernel.

The analytic boundary criterion and Jensen--Hardy mechanism used here are
known.  The purpose of the paper is an explicit finite-radius treatment of
the trivial zeros and an information-theoretic reformulation, not a new proof
of the Riemann hypothesis or an independent boundary-integral criterion.
\end{abstract}

\noindent\textbf{Keywords:} Jensen's formula; Riemann zeta-function;
trivial zeros; Hardy spaces; Balazard--Saias--Yor criterion; relative entropy;
data processing; Riemann $\Xi$-function.

\section{Introduction}

Balazard, Saias, and Yor \cite{BSY1999} proved the identity
\begin{equation}\label{eq:bsy}
 \frac{1}{2\pi}\int_{\R}
 \frac{\log|\zetahalf|}{t^2+1/4}\dd t
 =\sum_{\Rea\rho>1/2}m(\rho)
   \log\left|\frac{\rho}{1-\rho}\right|,
\end{equation}
where the sum runs over distinct nontrivial zeros of $\zeta$, with
multiplicities $m(\rho)$ and both signs of the imaginary part.  Every summand
is positive.  Consequently, the vanishing of the integral is equivalent to
the Riemann hypothesis.
Their proof already starts with Jensen's formula on circles of radius
$r<1$ and uses Hardy-space factorization in the boundary passage.
Burnol \cite{Burnol2001} gives a half-plane factorization of
$(s-1)\zeta(s)/s$; Kunik \cite{Kunik2008} records a convenient symmetric form.
Elaissaoui and Guennoun \cite{EG2015} prove a family of logarithmic integral
identities and explicitly give the specialization with weight $(1+t^2)^{-1}$.

Probabilistic viewpoints on the completed zeta function also have a substantial
literature.  Biane, Pitman, and Yor \cite{BianePitmanYor2001} relate Riemann's
theta-function representations to probability laws; Nakamura
\cite{Nakamura2015} studies normalized completed zeta functions as
characteristic functions and formulates RH-related divisibility properties.
More recent characteristic-function approaches include Nakamura and Suzuki
\cite{NakamuraSuzuki2023} and Konstantopoulos, Patie, and Sarkar
\cite{KonstantopoulosPatieSarkar2024}.  These works provide context for the
probabilistic language used below.  The relative-entropy construction in the
present paper is obtained from the logarithmic-integral identity rather than
from an independent probabilistic proof of it.

The present normalization uses the left half-plane instead of the right
half-plane.  It therefore includes the trivial zeros, whose contribution
must not be discarded.  For a particular choice of center, their finite
Jensen product telescopes exactly.  This gives a direct description of the
divergence and separates it from the bounded contribution of possible
nontrivial zeros off the critical line.

Our principal objects are a circular mean $I(r)$ and the explicitly known
trivial-zero contribution $A(r)$.  We prove
\[
 I(r)-A(r)=\int_0^r\frac{n_{\rm off}(t)}{t}\dd t,
\]
where $n_{\rm off}(t)$ counts mapped nontrivial zeros inside the disk of
radius $t$.  We then evaluate the asymptotic behavior of $A(r)$ and extend
the construction to a parameter $a\in(1/2,5/2)$.  The boundary-integral
statements are identified as known results.  The exact relation with the
original disk function in \cite{BSY1999} is given in
Section~\ref{sec:comparison}; thus no priority claim is made for the
underlying Jensen--Hardy method.

Sections~\ref{sec:xi-bridge} and \ref{sec:kl} give a relative-entropy
reformulation.  Combining the $a=1,2$ boundary identities produces an
absolutely convergent logarithmic integral of $\Xi$.  The law $\mu$ of
$Z=|\Xi(T)/\Xi(0)|^2$ is compared with its size-biased law
$\nu(\dd z)=z\mu(\dd z)/M$.  Lossless data processing reduces the
corresponding relative entropy to the scalar statistic $Z$, and
Theorem~\ref{thm:KL} identifies its gap from the target value with a positive
zero sum.  The variational and moment formulas at the end of the paper are
interpretations of this identity; they do not by themselves supply a new
estimate toward RH.

\section{Notation and Jensen's formula}\label{sec:notation}

Write $\D=\{z\in\C:|z|<1\}$.  All logarithms of positive real numbers are
natural logarithms.  In Sections~\ref{sec:notation}--\ref{sec:comparison},
$\rho=\beta+i\gamma$ denotes a nontrivial zero of $\zeta$;
the probability-measure notation used later is specified locally.
A sum over distinct zeros always includes the indicated multiplicity.
Unless restricted by $\gamma>0$, such a sum includes both signs of $\gamma$.

We use the standard facts that the trivial zeros of $\zeta$ are simple and
are exactly $-2,-4,\ldots$, that the nontrivial zeros satisfy $0<\beta<1$,
and that their multiset is invariant under conjugation and reflection in
$\Rea s=1/2$.  We also use the Riemann--von Mangoldt bound
\begin{equation}\label{eq:count-bound}
 \mathcal N(T):=\sum_{0<\Ima\rho\le T}m(\rho)=O(T\log T)
 \qquad(T\ge2).
\end{equation}
These facts may be found in \cite[Chapter~9]{Titchmarsh1986}; the functional
and reflection formulas are also recorded in \cite[\S25.4]{DLMF}.

\begin{lemma}[Jensen's formula]\label{lem:jensen}
Let $f$ be holomorphic on $\D$, with $f(0)\ne0$.  For $0<r<1$,
\begin{equation}\label{eq:jensen}
 \mean{\log|f(re^{i\theta})|}
 =\log|f(0)|+
 \sum_{\substack{f(\alpha)=0\\|\alpha|<r}}
 m(\alpha)\log\frac r{|\alpha|}.
\end{equation}
The integral is understood as a Lebesgue integral, or as an improper
integral at the finitely many zeros on the circle.  In particular, the
identity remains valid when $f$ has zeros on $|z|=r$.
\end{lemma}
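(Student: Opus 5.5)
The plan is the classical route: divide out the zeros by Blaschke factors and apply the mean value property to the logarithm of what remains. Fix $0<r<1$. Since $f(0)\ne0$ and $f$ is holomorphic on $\D$, the zeros of $f$ in the closed disk $|z|\le r$ are finitely many; call them $\alpha_1,\dots,\alpha_N$ with multiplicity. Put
\[
 B(z)=\prod_{|\alpha_j|<r}\frac{r(z-\alpha_j)}{r^2-\bar\alpha_j z},
 \qquad
 P(z)=\prod_{|\alpha_j|=r}(z-\alpha_j).
\]
Then $g=f/(BP)$ is holomorphic and zero-free on a neighbourhood of $|z|\le r$. Hence $\log|g|$ is harmonic there, and the mean value property gives $\mean{\log|g(re^{i\theta})|}=\log|g(0)|$.

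Next I collect the boundary and centre values of the factors. Each factor of $B$ has modulus $1$ on $|z|=r$, so $\log|B|$ has zero mean on the circle. At the centre, $|B(0)|=\prod_{|\alpha_j|<r}|\alpha_j|/r$, so $\log|g(0)|$ equals
\[
 \log|f(0)|+\sum_{|\alpha|<r}\log\frac{r}{|\alpha|}-\sum_{|\alpha_j|=r}\log r.
\]
This already reproduces the right-hand side of \eqref{eq:jensen}, except for the correction term coming from the zeros on the circle.

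It remains to show that, for $|\alpha|=r$,
\[
 \mean{\log|re^{i\theta}-\alpha|}=\log r.
\]
This is the one step needing care, since the integrand has a logarithmic singularity; I expect it to be the main obstacle. Writing $\alpha=re^{i\phi}$, the integral reduces to
\[
 \log r+\mean{\log|1-e^{i\theta}|}.
\]
The singularity is integrable, so the Lebesgue integral and the improper integral coincide. The classical identity $\int_0^{2\pi}\log|1-e^{i\theta}|\dd\theta=0$ can be obtained in either of two ways:
\begin{itemize}
\item let $s\uparrow1$ in $\mean{\log|1-se^{i\theta}|}=0$, which holds by harmonicity of $\log|1-sz|$ on the closed unit disk; the passage to the limit is justified by dominated convergence, using $|1-se^{i\theta}|\ge|\sin(\theta/2)|$ for $\theta$ near $0$;
\item or compute directly, via $\int_0^{\pi}\log\sin x\dd x=-\pi\log2$.
\end{itemize}

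Summing the three contributions ($\log|g|$, $\log|B|$ and $\log|P|$), the $\log r$ terms from the zeros on the circle cancel. This yields \eqref{eq:jensen}, valid also when $f$ has zeros on $|z|=r$.
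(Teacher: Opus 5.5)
Your proof is correct and follows the paper's route. You factor out the zeros on $|z|=r$ as linear factors $z-\alpha$ and apply the zero-free-boundary Jensen formula to the quotient; each boundary factor then has circular logarithmic mean $\log r$ and modulus $r$ at the origin, so its net contribution vanishes. The only difference is that you also prove what the paper cites or asserts: the classical Jensen formula, via the Blaschke-type factors and the mean value property, and the identity $\int_0^{2\pi}\log|1-e^{i\theta}|\dd\theta=0$, via dominated convergence or $\int_0^{\pi}\log\sin x\dd x=-\pi\log2$.
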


\begin{proof}
The zero-free-boundary version is Jensen's formula; see, for example,
\cite{Duren1970}.  A zero $\alpha=re^{i\theta_0}$ of multiplicity $m$
produces the locally integrable singularity
$m\log|\theta-\theta_0|+O(1)$.  Factoring out all zeros on the circle
reduces the assertion to the usual version.  Each boundary factor has
circular logarithmic mean $\log r$ and value of modulus $r$ at the
origin, so its Jensen contribution is zero.
\end{proof}

One may instead exclude the countable set of moduli of all disk zeros.
Excluding only the mapped trivial-zero radii does not, without further
information, ensure a zero-free integration circle.  The integral
convention in Lemma~\ref{lem:jensen} avoids this distinction and allows
all $0<r<1$ below.

\section{The telescoping normalization}\label{sec:main}

Set
\begin{equation}\label{eq:maps}
 \begin{gathered}
 u(s)=(s-1)\zeta(s),\qquad
 S(z)=i\frac{1+z}{1-z},\qquad d(w)=\frac12+iw,\\
 \phi(z)=d(S(z))=\frac12-\frac{1+z}{1-z},\qquad
 V(z)=u(\phi(z)).
 \end{gathered}
\end{equation}
The removable value is $u(1)=1$.  Thus $u$ is entire and has exactly the
zeros of $\zeta$.  The map $\phi$ is biholomorphic from $\D$ onto
$\Rea s<1/2$, with
\begin{equation}\label{eq:inverse}
 \phi^{-1}(s)=\frac{s+1/2}{s-3/2},\qquad
 V(0)=-\frac32\zeta(-1/2)\ne0.
\end{equation}

The trivial zero $-2n$ corresponds to
\begin{equation}\label{eq:trivial-images}
 z_n=\phi^{-1}(-2n)=\frac{4n-1}{4n+3},\qquad n\ge1.
\end{equation}
These numbers are positive and increase to $1$.  Define
\begin{equation}\label{eq:definitions}
 \begin{gathered}
 n(r)=\card\{n\ge1:z_n<r\},\qquad
 T(r)=\prod_{z_n<r}\frac r{z_n},\\
 A(r)=\log|V(0)T(r)|,\qquad
 I(r)=\mean{\log|V(re^{i\theta})|}.
 \end{gathered}
\end{equation}
An empty product equals $1$.  In particular, $n(r)=0$ and $T(r)=1$
for $0<r\le3/7$.

\begin{proposition}[The trivial-zero product]\label{prop:product}
For every $0<r<1$,
\begin{equation}\label{eq:telescoping}
 T(r)=r^{n(r)}\frac{4n(r)+3}{3}
\end{equation}
and
\begin{equation}\label{eq:A-explicit}
 A(r)=\log|\zeta(-1/2)|+n(r)\log r
       +\log\frac{4n(r)+3}{2}.
\end{equation}
If $r\notin\{z_n:n\ge1\}$, then
\begin{equation}\label{eq:n-floor}
 n(r)=\left\lfloor\frac{1+3r}{4(1-r)}\right\rfloor.
\end{equation}
\end{proposition}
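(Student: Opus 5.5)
The plan has three steps: show that $\{n:z_n<r\}$ is an initial segment, evaluate the product by telescoping, and then take logarithms and solve the defining inequality for $n(r)$. First, since $z_n=(4n-1)/(4n+3)=1-4/(4n+3)$ is strictly increasing in $n$, the set $\{n\ge1:z_n<r\}$ is the initial segment $\{1,\dots,N\}$ with $N=n(r)$. This set is finite because $z_n\to1$ and $r<1$. Hence
\[
 T(r)=\prod_{n=1}^{N}\frac{r}{z_n}=r^{N}\prod_{n=1}^{N}\frac{4n+3}{4n-1}.
\]

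The key observation is that $4n-1=4(n-1)+3$. The numerator of the $n$-th factor therefore cancels the denominator of the $(n+1)$-th factor, and the product telescopes:
\[
 \prod_{n=1}^{N}\frac{4n+3}{4(n-1)+3}=\frac{4N+3}{3}.
\]
This gives \eqref{eq:telescoping}, and the empty case $N=0$ is consistent with $T(r)=1$. For \eqref{eq:A-explicit}, substitute into $A(r)=\log|V(0)|+\log T(r)$ and use $V(0)=-\frac32\zeta(-1/2)$ from \eqref{eq:inverse}. Then $\log|V(0)|=\log\frac32+\log|\zeta(-1/2)|$, and the constant combines as $\log\frac32+\log\frac{4N+3}{3}=\log\frac{4N+3}{2}$. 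Note that $T(r)>0$, so no absolute value is needed on $T$.

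For \eqref{eq:n-floor}, I would solve $z_n<r$ for $n$. Since $4n+3>0$ and $1-r>0$, the inequality $(4n-1)<r(4n+3)$ is equivalent to $4n(1-r)<1+3r$, that is, $n<x:=\frac{1+3r}{4(1-r)}$. Now suppose $r\notin\{z_n\}$. If $x$ were an integer $n\ge1$, inverting the same computation would give $r=z_n$. Moreover $x>1/4>0$, so $x$ cannot be a nonpositive integer. Hence $x$ is not an integer, and the number of integers $n\ge1$ with $n<x$ is $\lfloor x\rfloor$.

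The only step needing care is this last equivalence, namely that $x\in\mathbb Z_{\ge1}$ exactly when $r=z_x$. This ensures that strict and non-strict inequality give the same count. Everything else is direct algebra.
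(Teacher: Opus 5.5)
Your proposal is correct and follows the paper's own proof: telescoping $\prod_{n=1}^{N}(4n+3)/(4n-1)=(4N+3)/3$, inserting $V(0)=-\frac32\zeta(-1/2)$ from \eqref{eq:inverse}, and solving $z_n<r$ as $n<(1+3r)/(4(1-r))$. You also spell out two points the paper leaves implicit: the index set is an initial segment because $z_n$ is increasing, and the floor formula is exact off $\{z_n\}$ because $x$ is then not an integer.
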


\begin{proof}
For $N\ge0$ the product telescopes:
\[
 \prod_{n=1}^{N}\frac1{z_n}
 =\prod_{n=1}^{N}\frac{4n+3}{4n-1}
 =\frac{4N+3}{3}.
\]
Equations~\eqref{eq:telescoping} and \eqref{eq:A-explicit} follow from
\eqref{eq:inverse}.  Finally,
$z_n<r$ is equivalent to $n<(1+3r)/(4(1-r))$, which gives
\eqref{eq:n-floor} away from equality.  At $r=z_n$ the strict-count
definition in \eqref{eq:definitions} applies.
\end{proof}

For a nontrivial zero $\rho$, put
\begin{equation}\label{eq:w-rho}
 w_\rho=\frac{\rho+1/2}{\rho-3/2}.
\end{equation}
The identity
\[
 |\rho-3/2|^2-|\rho+1/2|^2=4(1/2-\Rea\rho)
\]
shows that $|w_\rho|<1$ precisely when $\Rea\rho<1/2$.

\begin{theorem}[Finite-radius criterion]\label{thm:finite}
For every $0<r<1$,
\begin{equation}\label{eq:defect}
 D(r):=I(r)-A(r)
 =\sum_{\substack{\rho\ \text{nontrivial}\\
                  \Rea\rho<1/2,\ |w_\rho|<r}}
   m(\rho)\log\frac r{|w_\rho|}\ge0.
\end{equation}
The function $D$ is continuous and nondecreasing.  The following are
equivalent:\nopagebreak[4]
\begin{enumerate}[label=\textup{(\roman*)},itemsep=2pt,beginpenalty=10000,midpenalty=10000]
 \item the Riemann hypothesis;
 \item $I(r)=A(r)$ for every $0<r<1$;
 \item $I(r_j)=A(r_j)$ for some sequence $r_j\uparrow1$;
 \item $\lim_{r\uparrow1}(I(r)-A(r))=0$.
\end{enumerate}
\end{theorem}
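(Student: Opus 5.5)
The plan is to apply Lemma~\ref{lem:jensen} to $f=V$ on $\D$. This is legitimate because $V$ is holomorphic on $\D$, being the composition of the entire function $u$ with the biholomorphism $\phi$, and $V(0)\ne0$ by \eqref{eq:inverse}. Since $\phi$ is a bijection from $\D$ onto $\Rea s<1/2$, the zeros of $V$ in $\D$ are exactly the points $\phi^{-1}(\sigma)$, where $\sigma$ runs over zeros of $u$ (equivalently of $\zeta$) with $\Rea\sigma<1/2$. Multiplicities are preserved because $\phi$ is conformal.

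These zeros fall into two classes. The first class is the trivial zeros $-2n$. They are simple and map to the positive reals $z_n$ of \eqref{eq:trivial-images}, so the subsum over $|z_n|<r$ equals $\sum_{z_n<r}\log(r/z_n)=\log T(r)$. The second class is the nontrivial zeros with $\Rea\rho<1/2$, which map to $w_\rho$ by \eqref{eq:w-rho}. Jensen's formula then reads $I(r)=\log|V(0)|+\log T(r)+\sum_{|w_\rho|<r}m(\rho)\log(r/|w_\rho|)$. Since $A(r)=\log|V(0)T(r)|$, this gives \eqref{eq:defect}. Each summand is positive, and the sum is finite for fixed $r<1$ because the zeros of a holomorphic function cannot accumulate inside $\D$. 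Hence $D(r)\ge0$.

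For regularity, I would write $D(r)=\sum m(\rho)\log^+(r/|w_\rho|)$, where only finitely many terms are nonzero on any interval $(0,r_0]$ with $r_0<1$. Each term is continuous and nondecreasing in $r$, including at $r=|w_\rho|$, where it vanishes continuously. So $D$ is continuous and nondecreasing. This also gives the integrated-count form stated in the introduction, since $\log^+(r/|w|)=\int_0^r \mathbf 1_{t>|w|}\,\dd t/t$.

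For the equivalences, RH means that no nontrivial zero has $\Rea\rho<1/2$. By the reflection symmetry $\rho\mapsto1-\bar\rho$, this is the same as having no zeros off the line. Under RH the sum is empty, so (i)$\Rightarrow$(ii). The implications (ii)$\Rightarrow$(iii) and (ii)$\Rightarrow$(iv) are trivial.

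For (iii)$\Rightarrow$(i), suppose RH fails. Then there is $\rho$ with $\Rea\rho<1/2$, so $|w_\rho|<1$. Choose $j$ with $r_j>|w_\rho|$. Then $D(r_j)\ge m(\rho)\log(r_j/|w_\rho|)>0$, a contradiction. For (iv)$\Rightarrow$(i), monotonicity gives $\lim_{r\uparrow1}D(r)\ge D(r_0)>0$ for any $r_0\in(|w_\rho|,1)$, again a contradiction. This last limit exists in $[0,\infty]$ by monotonicity.

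The only delicate point is bookkeeping. One must check that the zero set of $V$ consists of exactly these two classes with correct multiplicities; this uses $u(1)=1$ and the fact that the pole at $s=1$ lies outside $\Rea s<1/2$. One must also handle radii where zeros lie on $|z|=r$, which Lemma~\ref{lem:jensen} already covers. No growth estimate such as \eqref{eq:count-bound} is needed at finite radius.
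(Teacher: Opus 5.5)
Your proposal is correct and follows the paper's proof essentially step for step. You apply Lemma~\ref{lem:jensen} to $V$, identify its zeros in $\D$ as the $z_n$ and the $w_\rho$ with $\Rea\rho<1/2$, subtract $\log T(r)$, get continuity and monotonicity from the finitely many $\log^+$ terms, and derive the equivalences from the strict positivity of each off-line contribution together with reflection in the critical line. Your explicit chain (i)$\Rightarrow$(ii)$\Rightarrow$(iii),(iv)$\Rightarrow$(i) spells out what the paper compresses into a single sentence.
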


\begin{proof}
The zeros of $V$ in $\D$ are the $z_n$ and the $w_\rho$ with
$\Rea\rho<1/2$.  Their multiplicities are preserved by the conformal
map.  Apply Lemma~\ref{lem:jensen} and subtract the trivial-zero
contribution.  There are finitely many terms for each $r<1$.
Each term is the continuous nondecreasing function
$m(\rho)\max\{0,\log(r/|w_\rho|)\}$, proving the asserted properties
on every compact subinterval of $(0,1)$.

Under RH the sum is empty.  Conversely, a zero with $\Rea\rho<1/2$
contributes strictly positively for every $r>|w_\rho|$.  Each of
(ii)--(iv) therefore excludes all such zeros.  Reflection in the
critical line then excludes zeros to its right as well.
\end{proof}

\begin{corollary}[Counting form]\label{cor:counting}
Let
\[
 n_{\rm off}(t)=
 \sum_{\substack{\Rea\rho<1/2\\|w_\rho|<t}}m(\rho).
\]
Then
\begin{equation}\label{eq:integrated-count}
 D(r)=\int_0^r\frac{n_{\rm off}(t)}{t}\dd t,
 \qquad rD'(r)=n_{\rm off}(r)
\end{equation}
for the derivative identity away from the moduli $|w_\rho|$.
In particular, $x\mapsto D(e^x)$ is convex on $(-\infty,0)$.
\end{corollary}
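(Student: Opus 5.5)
The plan is to start from the finite sum in Theorem~\ref{thm:finite} and rewrite each summand as an integral of an indicator function. For a single zero $\rho$ with $\Rea\rho<1/2$, put $a=|w_\rho|\in(0,1)$. Note that $a>0$, because $w_\rho=0$ would force $\rho=-1/2$, which is not a nontrivial zero. We have the elementary identity
\[
 \max\Bigl\{0,\log\frac r{a}\Bigr\}=\int_0^r\frac{\mathbf 1_{\{a<t\}}}{t}\dd t ,
\]
valid for all $0<r<1$. Both sides vanish for $r\le a$, and for $r>a$ the right side equals $\int_a^r\dd t/t$.

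First I multiply this identity by $m(\rho)$ and sum over all $\rho$ with $\Rea\rho<1/2$ and $|w_\rho|<r$. This is a finite sum, as noted in the proof of Theorem~\ref{thm:finite}. Zeros with $|w_\rho|\ge r$ contribute nothing to either side on $[0,r]$, so I may interchange the finite sum with the integral. The integrand then becomes $n_{\rm off}(t)/t$, which gives the first identity in \eqref{eq:integrated-count}. The integrand is locally bounded on $(0,r]$ and vanishes near $0$, since $n_{\rm off}(t)=0$ for $t\le\min|w_\rho|$. Hence the integral is a proper Lebesgue integral.

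Next, $n_{\rm off}$ is a step function on $[0,r]$. It is constant between consecutive moduli $|w_\rho|$, and there are only finitely many of these below any $r<1$. It is left-continuous, because the inequality in its definition is strict. By the fundamental theorem of calculus, at any $r$ that is not a modulus $|w_\rho|$ the integrand is continuous at $r$. So $D'(r)=n_{\rm off}(r)/r$, which is the derivative identity.

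For convexity, substitute $t=e^y$ to get $D(e^x)=\int_{-\infty}^x n_{\rm off}(e^y)\dd y$. The integrand is nondecreasing in $y$, and the integral of a nondecreasing locally bounded function is convex. Concretely, for $x_1<x_2<x_3$ the average slope over $[x_1,x_2]$ is at most the average slope over $[x_2,x_3]$. I expect no real obstacle here. The only care needed is the bookkeeping at the moduli $|w_\rho|$, where $D$ has corners: the derivative identity is only claimed away from those points, while convexity holds across them.
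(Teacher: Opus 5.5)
Your proof is correct and follows the same route as the paper. Each zero at modulus $b<r$ contributes $\int_b^r \dd t/t=\log(r/b)$, and summing these finitely many contributions gives the integrated count; the derivative identity and the convexity of $x\mapsto D(e^x)$ then follow because $n_{\rm off}$ is a nondecreasing step function, and you simply supply the details (finiteness, the fundamental theorem of calculus away from the moduli $|w_\rho|$, and the substitution $t=e^y$) that the paper leaves implicit.
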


\begin{proof}
A zero at modulus $b<r$ contributes
$\int_b^r\dd t/t=\log(r/b)$.  The derivative and convexity follow,
since the counting function is nondecreasing.
\end{proof}

The zeros on the critical line map to the unit circle and never enter
these finite-radius counts.  The difference $D$ measures only the
additional zeros in the left half-plane; it is not the Jensen sum over
all zeros, which would yield an identity regardless of RH.

\section{The finite part of the divergent mean}\label{sec:finite-part}

\begin{lemma}\label{lem:delta}
The series
\begin{equation}\label{eq:delta}
 \Delta=\sum_{\Rea\rho<1/2}m(\rho)
 \log\left|\frac{\rho-3/2}{\rho+1/2}\right|
\end{equation}
converges and satisfies $0\le\Delta<\infty$.  Moreover,
$D(r)\uparrow\Delta$ as $r\uparrow1$.
\end{lemma}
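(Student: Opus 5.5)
Each summand of $\Delta$ is positive, because $|w_\rho|<1$ whenever $\Rea\rho<1/2$ and $\log|(\rho-3/2)/(\rho+1/2)|=\log(1/|w_\rho|)$. Hence $\Delta$ is a series of nonnegative terms with a value in $[0,\infty]$, and two things remain: finiteness and the limit $D(r)\uparrow\Delta$. The limit is the easier part. By \eqref{eq:defect}, $D(r)=\sum m(\rho)\max\{0,\log(r/|w_\rho|)\}$ over $\Rea\rho<1/2$. Each term is nondecreasing in $r$ and tends to $m(\rho)\log(1/|w_\rho|)$ as $r\uparrow1$. The monotone convergence theorem for series (counting measure on the set of zeros) then gives $D(r)\uparrow\Delta$ in $[0,\infty]$. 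It therefore suffices to prove $\Delta<\infty$.

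For finiteness I would estimate the terms directly. From the identity preceding Theorem~\ref{thm:finite},
\[
 \frac{1}{|w_\rho|^2}=1+\frac{4(1/2-\beta)}{|\rho+1/2|^2},
\]
so $\log(1/|w_\rho|)\le 2(1/2-\beta)/|\rho+1/2|^2\le 1/|\rho+1/2|^2$, using $\log(1+x)\le x$ and $0<\beta<1$. Since $|\rho+1/2|\ge\max\{1/2,|\gamma|\}$, it is enough to show that $\sum m(\rho)\min\{4,\gamma^{-2}\}$ converges over all nontrivial zeros.

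This last step is standard and follows from \eqref{eq:count-bound} by partial summation. The zeros with $|\gamma|\le2$ are finite in number, since $\mathcal N(2)<\infty$, and each contributes at most $4$. For the rest, with $\gamma>0$ (the case $\gamma<0$ follows by conjugation symmetry),
\[
 \sum_{\gamma>2}\frac{m(\rho)}{\gamma^2}=\int_2^\infty\frac{\dd\mathcal N(T)}{T^2}
 \le\lim_{X\to\infty}\frac{\mathcal N(X)}{X^2}+2\int_2^\infty\frac{\mathcal N(T)}{T^3}\dd T<\infty .
\]
Both pieces are finite because $\mathcal N(T)=O(T\log T)$. The restriction $\Rea\rho<1/2$ only removes terms, so $\Delta<\infty$.

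I expect no real obstacle. The only point needing care is the bound $\log(1/|w_\rho|)\ll|\rho|^{-2}$: it uses the fact that $1/2-\beta$ is bounded, and a crude estimate of order $|\rho|^{-1}$ would not give a convergent series.
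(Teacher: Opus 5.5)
Your proof is correct and follows essentially the paper's route. Your bound $\log(1/|w_\rho|)\le 2(1/2-\beta)/|\rho+1/2|^2\le 1/|\rho+1/2|^2$ is the paper's estimate $q(\rho)\le 2\delta/(\gamma^2+(1-\delta)^2)\le 1/(\gamma^2+1/4)$ in the $w_\rho$ notation, and you obtain the limit $D(r)\uparrow\Delta$ by the same monotone convergence argument; the only difference is cosmetic, namely partial summation against $\mathrm d\mathcal N$ where the paper sums over dyadic blocks $2^k<|\gamma|\le 2^{k+1}$.
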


\begin{proof}
Write $\rho=1/2-\delta+i\gamma$, where $0<\delta<1/2$.  Its
contribution, without the multiplicity, is
\begin{align}\label{eq:delta-bound}
 q(\rho)
 &=\frac12\log\frac{\gamma^2+(1+\delta)^2}
                         {\gamma^2+(1-\delta)^2}\notag\\
 &=\frac12\log\left(1+
     \frac{4\delta}{\gamma^2+(1-\delta)^2}\right)
 \le\frac{1}{\gamma^2+1/4}.
\end{align}
The bound \eqref{eq:count-bound} implies
$\sum_\rho m(\rho)/(1+\gamma^2)<\infty$: the dyadic block
$2^k<|\gamma|\le2^{k+1}$ contributes $O(k2^{-k})$.
Thus \eqref{eq:delta} converges.  Monotone convergence applied to
\eqref{eq:defect} gives the limit.
\end{proof}

\begin{theorem}[Explicit finite part]\label{thm:finite-part}
Define
\begin{equation}\label{eq:C}
 C=\log\bigl(2|\zeta(-1/2)|\bigr)-1
  =\log\frac{\zeta(3/2)}{2\pi}-1.
\end{equation}
Without assuming RH,
\begin{equation}\label{eq:asymptotic}
 A(r)=-\log(1-r)+C+o(1),\qquad
 I(r)=-\log(1-r)+C+\Delta+o(1).
\end{equation}
Consequently,
\begin{equation}\label{eq:finite-part-criterion}
 \RH\quad\Longleftrightarrow\quad
 \lim_{r\uparrow1}\bigl(I(r)+\log(1-r)\bigr)=C.
\end{equation}
\end{theorem}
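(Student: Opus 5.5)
The plan is to work directly from the explicit formula \eqref{eq:A-explicit} in Proposition~\ref{prop:product}, then use Theorem~\ref{thm:finite} and Lemma~\ref{lem:delta} for $I(r)$. The second equality in \eqref{eq:C} is a separate check via the functional equation.

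\textbf{The asymptotics of $A(r)$.}
Write $N=n(r)$. By \eqref{eq:n-floor},
\[
 N=\frac{1+3r}{4(1-r)}+O(1)=\frac{1}{1-r}+O(1)
 \qquad (r\uparrow1).
\]
The boundary points $r=z_n$ only shift $N$ by at most one, which is harmless. The term $\log\frac{4N+3}{2}$ splits as
\[
 \log\frac{4N+3}{2}=\log 2+\log N+o(1)=\log2-\log(1-r)+o(1).
\]
The delicate term is $N\log r$. Since $\log r=-(1-r)+O((1-r)^2)$, we get $N\log r=-N(1-r)+o(1)$. The product $N(1-r)$ converges to $1$, because its error term $O(1)\cdot(1-r)$ tends to $0$. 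Hence $N\log r\to-1$, and
\[
 A(r)=\log|\zeta(-1/2)|+\log2-1-\log(1-r)+o(1),
\]
which is $-\log(1-r)+C+o(1)$.

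The main point to watch is the $-1$. It comes from the floor function, and the argument must not treat $N\log r$ as $o(1)$ or as $N$ times a fixed constant. One has to use that $N(1-r)\to1$ exactly, not merely that it is bounded.

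\textbf{The asymptotics of $I(r)$ and the criterion.}
By \eqref{eq:defect}, $I(r)=A(r)+D(r)$. Lemma~\ref{lem:delta} gives $D(r)\to\Delta$, so $I(r)=-\log(1-r)+C+\Delta+o(1)$. Then \eqref{eq:finite-part-criterion} reduces to showing that $\Delta=0$ is equivalent to RH.
\begin{itemize}
 \item Under RH the sum in \eqref{eq:delta} is empty, so $\Delta=0$.
 \item Conversely, each summand in \eqref{eq:delta} is strictly positive, because $|\rho-3/2|>|\rho+1/2|$ when $\Rea\rho<1/2$. So $\Delta=0$ forces the absence of zeros with $\Rea\rho<1/2$.
 \item Reflection symmetry of the nontrivial zeros then excludes zeros with $\Rea\rho>1/2$ as well.
\end{itemize}
Alternatively, one can cite the equivalence (i)$\Leftrightarrow$(iv) of Theorem~\ref{thm:finite} directly.

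\textbf{The identity $2|\zeta(-1/2)|=\zeta(3/2)/(2\pi)$.}
Use the functional equation
\[
 \zeta(1-s)=2(2\pi)^{-s}\cos(\pi s/2)\Gamma(s)\zeta(s)
\]
at $s=3/2$. The ingredients are $\cos(3\pi/4)=-\frac{\sqrt2}{2}$, $\Gamma(3/2)=\frac{\sqrt\pi}{2}$, and $(2\pi)^{-3/2}$. Multiplying these out gives
\[
 \zeta(-1/2)=-\frac{\zeta(3/2)}{4\pi},
\]
so $2|\zeta(-1/2)|=\zeta(3/2)/(2\pi)$, as claimed.
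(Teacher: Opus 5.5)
Your proposal is correct and follows the paper's proof: you insert $n(r)=1/(1-r)+O(1)$ into the explicit formula for $A(r)$ to get $n(r)\log r\to-1$, add $D(r)\uparrow\Delta$ from Lemma~\ref{lem:delta}, and finish with $\zeta(-1/2)=-\zeta(3/2)/(4\pi)$ and the positivity of the summands of $\Delta$. You give more detail than the paper on why $n(r)\log r\to-1$; note, however, that the $-1$ comes from the factor $r^{n(r)}\approx r^{1/(1-r)}\to e^{-1}$, not from the floor function itself.
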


\begin{proof}
Put $\varepsilon=1-r$.  Proposition~\ref{prop:product}, including its
strict-count convention at exceptional radii, gives
\[
 n(r)=\varepsilon^{-1}+O(1),\qquad
 n(r)\log r=-1+o(1),\qquad
 \log\frac{4n(r)+3}{2}=-\log\varepsilon+\log2+o(1).
\]
Insert these expressions into \eqref{eq:A-explicit}, and then use
Lemma~\ref{lem:delta}.  The equality of the two constants in
\eqref{eq:C} follows from
$\zeta(-1/2)=-\zeta(3/2)/(4\pi)$, a specialization of the functional
equation.  Finally, $\Delta=0$ if and only if its positive summands
are absent, which is equivalent to RH.
\end{proof}

The divergent term is therefore entirely supplied by the trivial
zeros.  Its cancellation is unconditional.  The exact finite constant,
not merely the existence of a finite limit, distinguishes RH.
The same limits hold along any sequence of radii tending to $1$,
including sequences avoiding any prescribed countable collection of
zero radii.

\section{A one-parameter family}\label{sec:parameter}

Fix
\begin{equation}\label{eq:a-range}
 \frac12<a<\frac52
\end{equation}
and define
\begin{equation}\label{eq:parameter-maps}
 \phi_a(z)=\frac12-a\frac{1+z}{1-z},\qquad
 \psi_a(z)=1-\phi_a(z),\qquad V_a=u\circ\phi_a.
\end{equation}
The restriction \eqref{eq:a-range} places $\phi_a(0)$ in $(-2,0)$,
so $V_a(0)\ne0$, and makes all mapped trivial zeros positive.  It is a
convenient range for the product formulas, not a restriction on the
known boundary-integral family.

The inverse map and trivial-zero images are
\begin{equation}\label{eq:a-inverse}
 w_{\rho,a}=\phi_a^{-1}(\rho)
 =\frac{\rho+a-1/2}{\rho-a-1/2},\qquad
 z_{n,a}=\frac{4n+1-2a}{4n+1+2a}.
\end{equation}
As before, set
\begin{equation}\label{eq:a-defs}
 \begin{gathered}
 N_a(r)=\card\{n\ge1:z_{n,a}<r\},\quad
 T_a(r)=\prod_{z_{n,a}<r}\frac r{z_{n,a}},\\
 A_a(r)=\log|V_a(0)T_a(r)|,\quad
 I_a(r)=\mean{\log|V_a(re^{i\theta})|},\quad
 D_a=I_a-A_a.
 \end{gathered}
\end{equation}
Subscript $a=1$ recovers the preceding notation.

\begin{proposition}[Gamma quotient]\label{prop:gamma}
Let $\alpha_\pm=(5\pm2a)/4$ and $N=N_a(r)$.  Then
\begin{equation}\label{eq:gamma-product}
 T_a(r)=r^N
 \frac{\Gamma(N+\alpha_+)}{\Gamma(N+\alpha_-)}
 \frac{\Gamma(\alpha_-)}{\Gamma(\alpha_+)}.
\end{equation}
Moreover,
\begin{equation}\label{eq:Na-asymptotic}
 N_a(r)=\frac{a}{1-r}+O_a(1).
\end{equation}
\end{proposition}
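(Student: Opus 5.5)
The plan is to mimic the telescoping argument of Proposition~\ref{prop:product}, now with a ratio of Pochhammer symbols in place of a telescoping product.

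\textbf{The product.} First confirm that the set $\{n:z_{n,a}<r\}$ is an initial segment $\{1,\dots,N\}$. Writing $z_{n,a}=1-4a/(4n+1+2a)$ shows that $z_{n,a}$ is strictly increasing in $n$. It is also positive for $a<5/2$, since $4n+1-2a>0$ for all $n\ge1$. Hence
\[
 T_a(r)=r^N\prod_{n=1}^{N}\frac{4n+1+2a}{4n+1-2a}.
\]
Dividing numerator and denominator by $4$ rewrites each factor as $(n+\alpha_+-1)/(n+\alpha_--1)$, because $(4n+1\pm2a)/4=n+(1\pm2a)/4=n-1+\alpha_\pm$. The product is then $(\alpha_+)_N/(\alpha_-)_N$. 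The functional equation $\Gamma(x+1)=x\Gamma(x)$ gives $(\alpha)_N=\Gamma(N+\alpha)/\Gamma(\alpha)$, and this yields \eqref{eq:gamma-product}. Note that $\alpha_->0$ exactly because $a<5/2$, so all Gamma values are finite and nonzero. As a check, $a=1$ gives $\alpha_+=7/4$, $\alpha_-=3/4$, and the quotient reduces to $(N+3/4)/(3/4)=(4N+3)/3$, matching \eqref{eq:telescoping}.

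\textbf{The counting asymptotic.} Solve $z_{n,a}<r$ for $n$. Clearing the positive denominator gives
\[
 4n+1-2a<r(4n+1+2a),
\]
which is equivalent to
\[
 n<\frac{2a(1+r)-(1-r)}{4(1-r)}.
\]
Therefore $N_a(r)$ equals this bound up to an error of at most $1$ (exact floor away from the exceptional radii, strict count at them, as in Proposition~\ref{prop:product}). The right side equals $\frac{a(1+r)}{2(1-r)}-\frac14$. Since $(1+r)/2=1-(1-r)/2$, this is $\frac{a}{1-r}-\frac a2-\frac14$, which gives \eqref{eq:Na-asymptotic} with the $O_a(1)$ term bounded by $a/2+5/4$, uniformly for $0<r<1$. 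For small $r$, where $N=0$, the bound is consistent: it is negative or small, and the estimate holds trivially.

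\textbf{Main obstacle.} Nothing here is deep. The only care needed is in the index shift matching $(4n+1\pm2a)/4$ to $n-1+\alpha_\pm$, and in tracking where the hypothesis $1/2<a<5/2$ is used. The bound $a<5/2$ ensures positivity of all $z_{n,a}$, so the count is an initial segment and $\Gamma(\alpha_-)$ is finite. The bound $a>1/2$ is not needed for this proposition; it enters only through $V_a(0)\ne0$.
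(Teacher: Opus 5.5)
Your proof is correct and follows essentially the same route as the paper: you write $1/z_{n,a}=(n-1+\alpha_+)/(n-1+\alpha_-)$, apply $\prod_{j=0}^{N-1}(x+j)=\Gamma(x+N)/\Gamma(x)$, and solve $z_{n,a}<r$ as $n<\frac{a}{1-r}-\frac a2-\frac14$. Your additional checks fill in details the paper leaves implicit: that $z_{n,a}$ is positive and increasing, so the counted set is the initial segment $\{1,\dots,N\}$, and the explicit uniform error constant $a/2+5/4$.
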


\begin{proof}
Write each factor as
\[
 \frac1{z_{n,a}}
 =\frac{n+(1+2a)/4}{n+(1-2a)/4}
\]
and apply $\prod_{j=0}^{N-1}(x+j)=\Gamma(x+N)/\Gamma(x)$.
The condition $z_{n,a}<r$ is equivalent to
\[
 n<\frac{a(1+r)}{2(1-r)}-\frac14
   =\frac{a}{1-r}-\frac a2-\frac14,
\]
which proves \eqref{eq:Na-asymptotic}.
\end{proof}

\begin{remark}[Two telescoping cases]\label{rem:two-cases}
Since $\alpha_+-\alpha_-=a$, integral values of $a$ make the gamma
ratio a finite polynomial in $N$.  In the interval \eqref{eq:a-range}
these values are $a=1$ and $a=2$:
\begin{equation}\label{eq:two-telescopes}
 T_1(r)=r^{N_1(r)}\frac{4N_1(r)+3}{3},\qquad
 T_2(r)=r^{N_2(r)}\frac{(4N_2(r)+1)(4N_2(r)+5)}{5}.
\end{equation}
Thus $a=1$ is the simplest telescoping case, not the only one.
\end{remark}

\begin{theorem}[Parameterized finite part]\label{thm:parameter}
For $a$ in \eqref{eq:a-range}, define
\begin{equation}\label{eq:delta-a}
 \Delta_a=\sum_{\Rea\rho<1/2}m(\rho)
 \log\left|\frac{\rho-a-1/2}{\rho+a-1/2}\right|.
\end{equation}
This series converges, and
\begin{equation}\label{eq:Da-sum}
 D_a(r)=\sum_{\substack{\Rea\rho<1/2\\|w_{\rho,a}|<r}}
 m(\rho)\log\frac r{|w_{\rho,a}|}
 \ \uparrow\ \Delta_a.
\end{equation}
Put
\begin{equation}\label{eq:Ca}
 C_a=\log\bigl((a-1/2)\zeta(a+1/2)\bigr)
       +a\log\frac a\pi-a.
\end{equation}
Then, unconditionally,
\begin{equation}\label{eq:a-asymptotic}
 A_a(r)=-a\log(1-r)+C_a+o(1),\qquad
 I_a(r)=-a\log(1-r)+C_a+\Delta_a+o(1).
\end{equation}
For each fixed such $a$, all the following are equivalent:
\[
 \RH;\qquad D_a(r)=0\ (0<r<1);\qquad \Delta_a=0;\qquad
 \lim_{r\uparrow1}\bigl(I_a(r)+a\log(1-r)\bigr)=C_a.
\]
\end{theorem}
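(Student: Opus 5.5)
The proof runs parallel to the case $a=1$ (Theorem~\ref{thm:finite}, Lemma~\ref{lem:delta}, Theorem~\ref{thm:finite-part}). The only genuinely new ingredient is identifying the constant $C_a$ through the functional equation.

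First, the zeros of $V_a$ in $\D$. The identity
\[
 |\rho-a-1/2|^2-|\rho+a-1/2|^2=4a(1/2-\Rea\rho)
\]
shows that $|w_{\rho,a}|<1$ exactly when $\Rea\rho<1/2$. Since $a<5/2$, the point $\phi_a(0)=1/2-a\in(-2,0)$ is neither a trivial zero nor a nontrivial one, so $V_a(0)\ne0$. Hence the zeros of $V_a$ in $\D$ are the $z_{n,a}$ and the $w_{\rho,a}$ with $\Rea\rho<1/2$, with multiplicities preserved by the conformal map. Lemma~\ref{lem:jensen}, minus the trivial-zero contribution $\log T_a(r)$, gives the finite-sum formula \eqref{eq:Da-sum}; each term is continuous and nondecreasing in $r$, exactly as in Theorem~\ref{thm:finite}.

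For convergence of $\Delta_a$, write $\rho=1/2-\delta+i\gamma$ with $0<\delta<1/2$. The summand is
\[
 \tfrac12\log\Bigl(1+\frac{4a\delta}{\gamma^2+(a-\delta)^2}\Bigr)
 \le\frac{2a\delta}{\gamma^2+(a-1/2)^2}
 \ll_a\frac1{1+\gamma^2},
\]
using $a>1/2$. The dyadic argument of Lemma~\ref{lem:delta}, based on \eqref{eq:count-bound}, then gives convergence. Monotone convergence gives $D_a(r)\uparrow\Delta_a$.

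For the asymptotics of $A_a$, I will use Proposition~\ref{prop:gamma} with $N=N_a(r)=a/\varepsilon+O_a(1)$, where $\varepsilon=1-r$. This gives
\begin{gather*}
 N\log r=-a+o(1),\\
 \log\frac{\Gamma(N+\alpha_+)}{\Gamma(N+\alpha_-)}=(\alpha_+-\alpha_-)\log N+o(1)=a\log\frac a\varepsilon+o(1),
\end{gather*}
where the second uses the standard $\Gamma(N+x)/\Gamma(N+y)\sim N^{x-y}$. Note that $\alpha_->0$ because $a<5/2$, so the gamma factors are finite. Therefore
\[
 A_a(r)=-a\log\varepsilon+a\log a-a+\log\Bigl|V_a(0)\frac{\Gamma(\alpha_-)}{\Gamma(\alpha_+)}\Bigr|+o(1).
\]

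The main step is to show
\[
 \Bigl|V_a(0)\frac{\Gamma(\alpha_-)}{\Gamma(\alpha_+)}\Bigr|=(a-1/2)\,\pi^{-a}\,\zeta(a+1/2).
\]
Here $V_a(0)=(-1/2-a)\,\zeta(1/2-a)$. I will use the symmetric functional equation $\pi^{-s/2}\Gamma(s/2)\zeta(s)=\pi^{-(1-s)/2}\Gamma((1-s)/2)\zeta(1-s)$ at $s=1/2-a$, which gives
\[
 \zeta(1/2-a)=\pi^{-a}\,\frac{\Gamma((1+2a)/4)}{\Gamma((1-2a)/4)}\,\zeta(1/2+a).
\]
Then $\Gamma(x)=\Gamma(x+1)/x$ turns these arguments into $\alpha_\pm$:
\[
 \Gamma\Bigl(\frac{1+2a}4\Bigr)=\frac{4\Gamma(\alpha_+)}{1+2a},\qquad
 \Gamma\Bigl(\frac{1-2a}4\Bigr)=\frac{4\Gamma(\alpha_-)}{1-2a}.
\]
Multiplying, the factor $(a+1/2)$ cancels against $(1+2a)/2$, leaving $|1-2a|/2=a-1/2$. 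This yields exactly $C_a$. As a check, $a=1$ reproduces $C=\log(\zeta(3/2)/(2\pi))-1$ from \eqref{eq:C}.

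Adding $\Delta_a$ via $I_a=A_a+D_a$ gives \eqref{eq:a-asymptotic}.

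Finally, the equivalences follow as in Theorems~\ref{thm:finite} and \ref{thm:finite-part}. Under RH the sums defining $D_a$ and $\Delta_a$ are empty. Conversely, any zero with $\Rea\rho<1/2$ makes $D_a(r)>0$ for $r>|w_{\rho,a}|$, and makes $\Delta_a>0$. By \eqref{eq:a-asymptotic}, the limit condition is equivalent to $\Delta_a=0$. Reflection in the critical line handles zeros with $\Rea\rho>1/2$.
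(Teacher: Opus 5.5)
Your proposal is correct and follows the paper's proof essentially step by step: Jensen's formula gives \eqref{eq:Da-sum}, the same summand bound handles convergence, the gamma-ratio asymptotic $N^{a}$ combined with $N_a(r)=a/(1-r)+O_a(1)$ gives $\log T_a$, and the functional equation at $s=1/2-a$ cancels $\Gamma(\alpha_-)/\Gamma(\alpha_+)$. The only cosmetic difference is that you use the symmetric completed form together with $\Gamma(x+1)=x\Gamma(x)$, whereas the paper writes the same computation as $u(s)=\Lambda(s)u(1-s)$ in \eqref{eq:Lambda}; both arrive at the identity \eqref{eq:center-functional}.
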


\begin{proof}
Jensen's formula gives \eqref{eq:Da-sum}.  For
$\rho=1/2-\delta+i\gamma$ with $0<\delta<1/2$, its limiting
contribution satisfies
\begin{equation}\label{eq:qa}
 \begin{split}
 q_a(\rho)
 &=\frac12\log\frac{\gamma^2+(a+\delta)^2}
                         {\gamma^2+(a-\delta)^2}\\
 &\le\frac{2a\delta}{\gamma^2+(a-\delta)^2}
 \le\frac{a}{\gamma^2+(a-1/2)^2}.
 \end{split}
\end{equation}
The convergence argument of Lemma~\ref{lem:delta} applies.

For fixed $a$, the gamma-ratio asymptotic
$\Gamma(N+\alpha_+)/\Gamma(N+\alpha_-)=N^a(1+O_a(N^{-1}))$
\cite[\S5.11(iii)]{DLMF}, together with
\eqref{eq:Na-asymptotic}, gives
\begin{equation}\label{eq:logTa}
 \log T_a(r)=-a\log(1-r)+a\log a-a
       +\log\frac{\Gamma(\alpha_-)}{\Gamma(\alpha_+)}+o(1).
\end{equation}
To simplify the constant, write the functional equation for $u$ as
\begin{equation}\label{eq:Lambda}
 u(s)=\Lambda(s)u(1-s),\qquad
 \Lambda(s)=\pi^{s-1/2}
       \frac{\Gamma((3-s)/2)}{\Gamma(1+s/2)}.
\end{equation}
This is an immediate rearrangement of the completed zeta functional
equation; the same normalization is used in \cite[p.~836]{EG2015}.
At $s=1/2-a$,
\begin{equation}\label{eq:center-functional}
 u(1/2-a)=\pi^{-a}
   \frac{\Gamma(\alpha_+)}{\Gamma(\alpha_-)}
   (a-1/2)\zeta(a+1/2)>0.
\end{equation}
Adding its logarithm to \eqref{eq:logTa} cancels the gamma constants
and gives \eqref{eq:Ca}.  The asymptotic for $I_a$ follows from
\eqref{eq:Da-sum}.  Positivity of the zero contributions and symmetry
about the critical line prove the equivalences.
\end{proof}

\begin{remark}[Geometry of the count]
Writing $s=\sigma+it$, the condition $|\phi_a^{-1}(s)|<r$ is
\begin{equation}\label{eq:geometry}
 \left(\sigma-\frac12+
      a\frac{1+r^2}{1-r^2}\right)^2+t^2
 <\left(\frac{2ar}{1-r^2}\right)^2.
\end{equation}
Thus the finite-radius defect counts nontrivial zeros in an increasing
family of disks exhausting the left half-plane.  The analogue of
\eqref{eq:integrated-count} holds with $w_{\rho,a}$ in place of $w_\rho$.
\end{remark}

\section{The boundary integral and the earlier Jensen--Hardy proof}
\label{sec:hardy}

\subsection{Removing the trivial zeros analytically}

Set
\begin{equation}\label{eq:QWG}
 Q_a=\Lambda\circ\phi_a,\qquad
 W_a=u\circ\psi_a,\qquad
 G_a(z)=\frac{W_a(z)}{\psi_a(z)}
       =\frac{\psi_a(z)-1}{\psi_a(z)}\zeta(\psi_a(z)).
\end{equation}
The function $\Lambda$ in \eqref{eq:Lambda} is holomorphic on
$\Rea s<1/2$ and has precisely the simple zeros $-2,-4,\ldots$
there: the numerator gamma factor has no poles or zeros in that
half-plane, and $1/\Gamma(1+s/2)$ supplies those zeros.
Consequently,
\begin{equation}\label{eq:factorization}
 V_a=Q_aW_a
\end{equation}
is an analytic separation of the complete trivial-zero contribution.
Jensen's formula gives
\begin{equation}\label{eq:Qmean}
 \mean{\log|Q_a(re^{i\theta})|}
  =\log Q_a(0)+\log T_a(r),
 \qquad
 Q_a(0)=\pi^{-a}\frac{\Gamma(\alpha_+)}{\Gamma(\alpha_-)}.
\end{equation}
In particular,
\begin{equation}\label{eq:Q-asymptotic}
 \mean{\log|Q_a(re^{i\theta})|}
 =-a\log(1-r)+a\log\frac a\pi-a+o(1).
\end{equation}
The factor $\psi_a$ is holomorphic and zero-free on $\D$, so the
circular mean of $\log|\psi_a|$ is $\log(a+1/2)$.  Subtracting
\eqref{eq:Qmean} from the mean of \eqref{eq:factorization} therefore
yields the exact finite-radius identity
\begin{equation}\label{eq:G-jensen}
 D_a(r)=\mean{\log|G_a(re^{i\theta})|}-\log G_a(0),
 \qquad
 G_a(0)=\frac{a-1/2}{a+1/2}\zeta(a+1/2)>0.
\end{equation}

\subsection{Hardy-space justification of the boundary passage}

For clarity, $H^2(\D)$ consists of the holomorphic functions $f$ satisfying
\[
 \sup_{0<r<1}\mean{|f(re^{i\theta})|^2}<\infty.
\]
Write $f^*$ for the almost-everywhere radial boundary function.
We use the canonical factorization $f=BSO$, with $B$ a Blaschke
product, $S$ singular inner, and $O$ outer; see \cite{Duren1970}.
For $f(0)\ne0$,
\begin{equation}\label{eq:hardy-gap}
 \mean{\log|f^*(e^{i\theta})|}-\log|f(0)|
 =-\log|B(0)|-\log|S(0)|.
\end{equation}
The boundary integral is finite.  Equality of that integral with
$\log|f(0)|$ expresses outerness, not innerness.  A singular inner
factor has to be excluded before a boundary equality can be inferred
from the absence of disk zeros.

\begin{lemma}\label{lem:hardy}
For $a$ in \eqref{eq:a-range}, $G_a\in H^2(\D)$ and its singular
inner factor is constant.
\end{lemma}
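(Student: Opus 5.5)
We prove $G_a\in H^2(\D)$ by an explicit growth estimate for $\zeta$ on the right half-plane $\Rea s>1/2$ through the conformal map $\psi_a$. We then show that the singular inner factor is trivial by exhibiting an $H^\infty$ multiplier, or a direct boundary-limit argument, that rules out mass of the singular measure.

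Here $\psi_a(z)=1/2+a(1+z)/(1-z)$ maps $\D$ onto $\Rea s>1/2$, and the boundary point $e^{i\theta}$ corresponds to $s=1/2+it$ with $t=a\cot(\theta/2)$. The function $G_a(z)=\frac{s-1}{s}\zeta(s)$, $s=\psi_a(z)$, is holomorphic on $\D$: the pole of $\zeta$ at $s=1$ is cancelled by the factor $s-1$, and $s\ne0$ in the half-plane. For the $H^2$ bound we use the change of variables $\dd\theta/(2\pi)$ to the harmonic measure on the line. On the circle $|z|=r$, the image is a circle inside the half-plane, and the pullback of $\dd\theta$ is the Poisson measure with density $\asymp$ a Cauchy kernel $\frac{1}{\pi}\frac{a_r}{(\sigma-\sigma_r)^2+\dots}$. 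Equivalently, it suffices to show that $|G_a|^2$ has a harmonic majorant on $\Rea s>1/2$ of the form $\int P_s(t')h(t')\dd t'$ with $\int h(t')\dd t'/(1+t'^2)<\infty$. We use the standard convexity estimates $\zeta(\sigma+it)=O(|t|^{1/4+\epsilon})$ uniformly for $\sigma\ge1/2$, $|t|\ge1$, and $\zeta$ bounded for $\sigma\ge2$. Together with the mean-square bound $\int_{-T}^T|\zeta(1/2+it)|^2\dd t=O(T\log T)$, these show that $\frac{s-1}{s}\zeta(s)$ lies in the weighted Hardy space $H^2$ of the half-plane with weight $|s+c|^{-2}$. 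This class corresponds exactly to $H^2(\D)$ under $\psi_a$, because the Jacobian factor is $\asymp 1/|s+a-1/2|^2$.

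Concretely, we apply Phragmén–Lindelöf, or the Gabriel convexity theorem, to $F(s)=\frac{(s-1)\zeta(s)}{s(s+a-1/2)}$. On the line $\sigma=1/2$, the $L^2$ norm of $F$ is finite by the dyadic mean-square bound, since each block $T<|t|\le2T$ contributes $O(\log T/T)$. On $\sigma=2$, $F$ is $O(1/|s|)$ and hence square integrable. The function has finite order, so Gabriel's $L^2$-convexity on vertical strips, combined with the trivial bound for $\sigma\ge2$, gives $\sup_{\sigma>1/2}\int|F(\sigma+it)|^2\dd t<\infty$. Hence $F\in H^2$ of the half-plane, which by the isometric correspondence is the statement $G_a\in H^2(\D)$. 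The point $s=\infty$ corresponds to $z=1$, where nothing singular occurs because $G_a\to1$ there.

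For the singular inner factor, the key observation is that $1/G_a$ is also controlled near the boundary in a suitable weak sense. The step I expect to be the main obstacle is excluding singular mass, in particular a possible atom at $z=1$ or mass on the circle where $\zeta(1/2+it)$ vanishes. The plan is as follows.
\begin{enumerate}[label=(\arabic*)]
\item Near $z=1$ ($\Rea s\to\infty$), we have $G_a\to1$, so $\log|G_a|$ is bounded there and no singular mass lies at $1$. The same holds off the boundary image of the critical line in a neighborhood.
\item A singular inner factor $S$ with measure $\sigma_S$ forces $|G_a(z)|\le\exp(-c/(1-|z|))$ along radii approaching $\sigma_S$-typical points. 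This is incompatible with a lower bound $\log|G_a(\psi_a^{-1}(s))|\ge -C\log|t|$ at $\sigma=1/2+\eta$, valid for most $t$.
\end{enumerate}
The lower bound in (2) is more robustly obtained via the Hadamard product of $\xi$, which is of order one. We take $E(s)=\xi(s)/$ (its Gamma factor) and write $\log|\zeta|$ as a sum over zeros plus harmonic terms of controlled growth. Each zero $\rho$ with $\Rea\rho\le1/2$ contributes the log of an outer function on the half-plane, together with a Blaschke-type factor when $\Rea\rho>1/2$. The Gamma part $\Gamma(s/2)\pi^{-s/2}$ is outer, since its logarithm is a Poisson integral of its boundary values of logarithmic growth. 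Assembling these gives $G_a=B\cdot O$ explicitly, and hence the singular factor is constant. Convergence of the zero sum uses \eqref{eq:count-bound} exactly as in Lemma~\ref{lem:delta}. This product-based route is the one I would try first, and the delicate point is to justify uniform convergence of the outer-function pieces.
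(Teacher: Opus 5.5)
Your proof that $G_a\in H^2(\D)$ is correct, though it takes a different route from the paper's. Since $\psi_a(z)+a-\frac12=2a/(1-z)$ and $\psi_a'(z)=2a/(1-z)^2$, your $F$ lies in $H^2(\Rea s>1/2)$ exactly when $G_a\in H^2(\D)$. Gabriel's theorem applies on $1/2\le\sigma\le2$, because $F\to0$ uniformly as $|t|\to\infty$ there; together with the mean-square theorem on the critical line and the trivial bound for $\sigma\ge2$, it gives the uniform $L^2$ bound. The paper avoids the mean-value theorem for $\zeta$ altogether. The Mellin transform $\int_0^1\{1/t\}t^{s-1}\dd t=\frac1{s-1}-\frac{\zeta(s)}s$ of an $L^2(0,1)$ function lies in $H^2(\Rea s>1/2)$ by Paley--Wiener. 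The function $(s-1)\zeta(s)/s^2$ is then $1/s$ minus this transform times a multiplier bounded by $1$.

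The genuine gap is the singular inner factor away from $z=1$. A lower bound for $\log|G_a|$ on a fixed line $\sigma=1/2+\eta$ only controls a neighbourhood of $z=1$. A bound valid ``for most $t$'' cannot exclude a singular measure, which is carried by a Lebesgue-null set, for instance near the critical zeros you yourself flag. The missing idea is elementary: $G_a$ continues analytically across $\partial\D\setminus\{1\}$. The pole of $\zeta$ is cancelled by $s-1$, and the pole of $G_a$ at $\psi_a^{-1}(0)=-(a+1/2)/(a-1/2)$ lies outside $\overline{\D}$. So near each $e^{i\theta_0}\ne1$ one has $|G_a(z)|\ge c|z-e^{i\theta_0}|^k$, which forces the singular measure to vanish there; isolated boundary zeros of finite order produce no singular mass.

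Your preferred Hadamard-product route does not supply this. Its step that $\pi^{-s/2}\Gamma(s/2)$ is outer on $\Rea s>1/2$ is false. On the boundary line, $\log|\Gamma(1/4+it/2)|=-\pi|t|/4+O(\log|t|)$, which is not integrable against $\dd t/(1+t^2)$. After transfer to the disk this factor is therefore not even in the Nevanlinna class; this is precisely why the paper needs the counterterm in Lemma~\ref{lem:gamma-mean}. Moreover, locally uniform limits of outer functions need not be outer: $\exp(-\lambda(1+z)/(1-rz))$ is outer for every $r<1$ and tends to a singular inner function as $r\to1$. So the ``delicate point'' you defer is the whole content of the claim.

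At $z=1$, your step (1) also overstates. $\log|G_a|$ is not bounded near $1$, because the boundary zeros coming from $\zeta(1/2+i\gamma)=0$ accumulate at $1$. What is true is the radial limit $G_a(r)\to1$. An atom $\lambda\delta_1$ is excluded only after the outer factor is controlled, as in the paper: $|G_a(r)|\le e^{-\lambda(1+r)/(1-r)}\|O\|_{H^2}(1-r^2)^{-1/2}\to0$, contradicting $G_a(r)\to1$. Note also that the rate $e^{-c/(1-|z|)}$ in your step (2) holds only at atoms of the singular measure, not at typical points of a continuous one.
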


\begin{proof}
We give the standard Hardy-space argument in the present coordinates;
compare \cite[Section~2]{Burnol2001}.  Put
\[
 h(s)=\frac{(s-1)\zeta(s)}{s^2}.
\]
Writing $\{x\}=x-\lfloor x\rfloor$, the Mellin identity
\[
 F(s):=\int_0^1\{1/t\}t^{s-1}\dd t
       =\frac1{s-1}-\frac{\zeta(s)}s
\]
holds initially for $\Rea s>1$ and extends to $\Rea s>1/2$.
Since $\{1/t\}\in L^2(0,1)$, the half-plane Paley--Wiener theorem
places $F$ in $H^2(\Rea s>1/2)$.  Also
\[
 h(s)=\frac1s-\frac{s-1}{s}F(s).
\]
The multiplier $(s-1)/s$ is bounded by $1$ in this half-plane and
$1/s$ belongs to its $H^2$ space.  Hence $h$ belongs to that space.
The conformal Hardy-space transfer gives
$h(\psi_a(z))/(1-z)\in H^2(\D)$, up to an inessential nonzero
normalizing constant.  Since
\[
 G_a(z)=\bigl((a+1/2)+(a-1/2)z\bigr)
              \frac{h(\psi_a(z))}{1-z},
\]
we obtain $G_a\in H^2(\D)$.

The function $G_a$ extends holomorphically across every point of the
unit circle except $1$.  The standard support property for singular
inner factors of a function continuing across an arc therefore leaves
at most a point mass at $1$; isolated boundary zeros do not create a
singular inner factor.  Such a mass $\lambda>0$ would contribute
\[
 S(z)=\exp\!\left(-\lambda\frac{1+z}{1-z}\right).
\]
If $G_a=BSO$, the point-evaluation bound for $O\in H^2(\D)$ would give
\[
 |G_a(r)|\le
 \exp\!\left(-\lambda\frac{1+r}{1-r}\right)
 \frac{\|O\|_{H^2}}{\sqrt{1-r^2}}\longrightarrow0.
\]
But $\psi_a(r)\to+\infty$ and $G_a(r)\to1$.  Thus $\lambda=0$.
\end{proof}

\begin{theorem}[Known boundary family in the present notation]
\label{thm:boundary}
Define
\begin{equation}\label{eq:Ma}
 M_a=\frac a\pi\int_{\R}
       \frac{\log|\zetahalf|}{a^2+t^2}\dd t,
 \qquad
 c_a=\log\left(\frac{a-1/2}{a+1/2}\zeta(a+1/2)\right).
\end{equation}
The integral converges absolutely and
\begin{equation}\label{eq:boundary-family}
 M_a=c_a+\Delta_a.
\end{equation}
In particular,
\begin{equation}\label{eq:boundary-rh}
 \RH\quad\Longleftrightarrow\quad M_a=c_a
 \quad\Longleftrightarrow\quad G_a\text{ is outer}.
\end{equation}
The circular finite part is related to this boundary integral by
\begin{equation}\label{eq:finite-boundary}
 \lim_{r\uparrow1}\bigl(I_a(r)+a\log(1-r)\bigr)
 =a\log\frac a\pi-a+\log(a+1/2)+M_a.
\end{equation}
\end{theorem}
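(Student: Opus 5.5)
The plan is to pass to the boundary in the exact finite-radius identity \eqref{eq:G-jensen}, using Lemma~\ref{lem:hardy} to control the inner factor. First we identify the boundary values. On $|z|=1$, $z=e^{i\theta}$ with $z\ne1$, the map $\psi_a$ is real-affine in the Cayley variable. Indeed $\psi_a(z)=\frac12+a\frac{1+z}{1-z}$ and $\frac{1+e^{i\theta}}{1-e^{i\theta}}=i\cot(\theta/2)$, so $\psi_a(e^{i\theta})=\frac12+ia\cot(\theta/2)$. We substitute $t=a\cot(\theta/2)$. A direct computation gives $\frac{\dd\theta}{2\pi}=\frac{a}{\pi}\frac{\dd t}{a^2+t^2}$, which is the normalized Poisson kernel of the half-plane at the point $\frac12+a$. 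Hence
\[
 \mean{\log|G_a^*(e^{i\theta})|}
 =\frac a\pi\int_\R\frac{\log\bigl|\frac{-1/2+it}{1/2+it}\bigr|+\log|\zetahalf|}{a^2+t^2}\dd t
 =M_a,
\]
since the first factor has modulus $1$. Absolute convergence of $M_a$ follows from $G_a\in H^2(\D)$: $\log^+|G_a^*|\le|G_a^*|^2$ is integrable, and $\log|G_a^*|\in L^1$ for any nonzero $H^p$ function.

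Next we apply \eqref{eq:hardy-gap} to $f=G_a$. By Lemma~\ref{lem:hardy} the singular factor is constant, so $M_a-\log G_a(0)=-\log|B(0)|$. Here $B$ is the Blaschke product over the zeros of $G_a$ in $\D$. Because $\psi_a$ maps $\D$ onto $\Rea s>1/2$ and $G_a$ has no trivial zeros there, these zeros are exactly $\psi_a^{-1}(\rho)$ for nontrivial $\rho$ with $\Rea\rho>1/2$. By the reflection $\rho\mapsto1-\rho$, these correspond to $w_{\rho,a}$ with $\Rea\rho<1/2$, with multiplicities preserved, since $\psi_a=1-\phi_a$. Thus $-\log|B(0)|=\sum m(\rho)\log(1/|w_{\rho,a}|)=\Delta_a$. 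Also $\log G_a(0)=c_a$ by \eqref{eq:G-jensen}, which gives \eqref{eq:boundary-family}. The Blaschke condition holds automatically, and agrees with the convergence shown in Theorem~\ref{thm:parameter}.

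Alternatively, one can argue directly from \eqref{eq:G-jensen}. There $D_a(r)\uparrow\Delta_a$. What is needed is that the circular means of $\log|G_a(re^{i\theta})|$ converge to $\mean{\log|G_a^*|}$. This convergence is precisely the statement that the singular inner factor is trivial, because the outer and Blaschke parts have convergent log-means. This is the main obstacle, and it is fully handled by Lemma~\ref{lem:hardy}; the remaining work is bookkeeping.

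For \eqref{eq:boundary-rh}: $\Delta_a=0$ if and only if RH holds, by Theorem~\ref{thm:parameter}. Also $\Delta_a=0$ exactly when $B$ is constant, which, together with the trivial singular factor, means $G_a$ is outer. Finally, \eqref{eq:finite-boundary} follows by combining \eqref{eq:a-asymptotic}, namely $I_a+a\log(1-r)\to C_a+\Delta_a$, with $\Delta_a=M_a-c_a$. It then remains to check $C_a-c_a=a\log\frac a\pi-a+\log(a+1/2)$, which is immediate from \eqref{eq:Ca} and \eqref{eq:Ma}.
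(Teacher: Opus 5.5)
Your proposal is correct and follows essentially the same route as the paper. You identify the Blaschke factor of $G_a$ through $\psi_a^{-1}(\rho)=\phi_a^{-1}(1-\rho)$ to get $-\log|B(0)|=\Delta_a$, combine \eqref{eq:hardy-gap} with Lemma~\ref{lem:hardy}, convert the boundary mean into $M_a$ via $t=a\cot(\theta/2)$ and $|(\psi_a-1)/\psi_a|=1$, and read off \eqref{eq:boundary-rh} and \eqref{eq:finite-boundary} from positivity of the zero sum and from $C_a-c_a=a\log\frac a\pi-a+\log(a+1/2)$. Your aside, that convergence of the circular means of $\log|G_a|$ to the boundary mean is equivalent to the singular factor being trivial, is a correct reformulation but is not needed for the proof.
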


\begin{proof}
Let $B_a$ be the Blaschke factor of $G_a$.  Its zeros are the disk
images, under $\psi_a^{-1}$, of the nontrivial zeros to the right of
the critical line.  Reflection across that line identifies their
moduli, with multiplicities, with those in \eqref{eq:a-inverse}
on the left.  Thus
\[
 -\log|B_a(0)|=\Delta_a.
\]
Equations~\eqref{eq:hardy-gap} and Lemma~\ref{lem:hardy} give
\[
 \mean{\log|G_a^*(e^{i\theta})|}=c_a+\Delta_a.
\]
For $0<\theta<2\pi$,
\[
 \psi_a(e^{i\theta})=\frac12+ia\cot(\theta/2),\qquad
 \left|\frac{\psi_a(e^{i\theta})-1}{\psi_a(e^{i\theta})}\right|=1.
\]
The substitution $t=a\cot(\theta/2)$ gives
$|\dd\theta|=2a\dd t/(a^2+t^2)$ and converts the boundary mean
into $M_a$.  Integrability of $\log|G_a^*|$ supplies absolute
convergence.  Positivity of \eqref{eq:delta-a} proves
\eqref{eq:boundary-rh}.  Finally,
$C_a-c_a=a\log(a/\pi)-a+\log(a+1/2)$, which yields
\eqref{eq:finite-boundary}.
\end{proof}

At $a=1$ these formulas become
\begin{equation}\label{eq:unit-boundary}
 \frac1\pi\int_{\R}\frac{\log|\zetahalf|}{1+t^2}\dd t
 =\log\frac{\zeta(3/2)}3+\Delta
\end{equation}
and
\begin{equation}\label{eq:unit-limit-boundary}
 \lim_{r\uparrow1}\bigl(I(r)+\log(1-r)\bigr)
 =\log\frac3{2\pi}-1+
 \frac1\pi\int_{\R}\frac{\log|\zetahalf|}{1+t^2}\dd t.
\end{equation}
Equation~\eqref{eq:unit-boundary} is explicitly recorded in
\cite[p.~840]{EG2015}.  The general identity
\eqref{eq:boundary-family} follows by evaluating the earlier
factorization of \cite[eq.~(2.2)]{Burnol2001} at $s=a+1/2$;
see also \cite[eqs.~(1.2), (1.4), and (1.6)]{Kunik2008}.

\begin{remark}[The circular mean is not a finite-radius Poisson integral]
For $r<1$,
\[
 \Rea\phi_a(re^{i\theta})
 =\frac12-a\frac{1-r^2}{|1-re^{i\theta}|^2}<\frac12.
\]
A change of the angular variable therefore cannot turn $I_a(r)$ into
an integral of $\log|u(1/2+it)|$ on the critical line.  The passage
above uses analytic factorization and the boundary values of $G_a$.
It does not interchange the limit and the original divergent integral.
\end{remark}

\subsection{Exact comparison with the earlier disk function}
\label{sec:comparison}

The disk function used in \cite{BSY1999} is
\[
 f_{\rm BSY}(w)=\frac{w}{1-w}\zeta\!\left(\frac1{1-w}\right),
\]
with the removable value $f_{\rm BSY}(0)=1$.  Put
\begin{equation}\label{eq:tau}
 \kappa_a=\frac{a-1/2}{a+1/2},\qquad
 \tau_a(z)=\frac{z+\kappa_a}{1+\kappa_a z}.
\end{equation}
A direct calculation gives
\begin{equation}\label{eq:exact-comparison}
 \psi_a(z)=\frac1{1-\tau_a(z)},\qquad
 W_a=f_{\rm BSY}\circ\tau_a.
\end{equation}
Consequently the defect remaining after the trivial-zero factor is
removed is the Jensen defect of the earlier function composed with a
disk automorphism.  For $a=1$, $\tau_1(z)=(1+3z)/(3+z)$.
This identifies the present work as a left-half-plane formulation of
the existing method.  Its explicit formulas are
\eqref{eq:telescoping}, \eqref{eq:gamma-product}, and the associated
finite parts, rather than a new boundary criterion.

\section{An absolutely convergent logarithmic integral of the completed function}
\label{sec:xi-bridge}

We next derive the real-axis identity needed for an information-theoretic
reformulation.  The starting point is the established boundary family in
Theorem~\ref{thm:boundary}; the completion and the choice of weight require
some care because a single Poisson-weighted logarithmic mean of $\Xi$
does not converge.

\subsection{Completion and the gamma contribution}

Use the standard normalization
\begin{equation}\label{eq:xi-definition}
 \xi(s)=\frac12s(s-1)\pi^{-s/2}\Gamma(s/2)\zeta(s),
 \qquad \Xi(t)=\xi(1/2+it).
\end{equation}
The completed function satisfies $\xi(s)=\xi(1-s)$; consequently $\Xi$
is even, entire, and real on $\R$ \cite[\S25.4]{DLMF}.  In particular,
\begin{equation}\label{eq:xi-imaginary}
 \Xi(ia)=\xi(1/2+a)>0\qquad(a>1/2).
\end{equation}
For real $t$, completion gives
\begin{equation}\label{eq:xi-modulus}
 |\Xi(t)|=\frac{t^2+1/4}{2\pi^{1/4}}
       |\Gamma(1/4+it/2)|\,|\zetahalf|.
\end{equation}
At a real zero the logarithm is interpreted by its locally integrable
singularity, as in Lemma~\ref{lem:jensen}.

\begin{lemma}[A regularized gamma mean]\label{lem:gamma-mean}
For $b,c>0$,
\begin{equation}\label{eq:gamma-mean}
 \frac b\pi\int_{\R}
 \frac{\log|\Gamma(c+ix)|+\frac\pi2|x|}{b^2+x^2}\dd x
 =\log\Gamma(c+b)+b(1-\log b).
\end{equation}
The integral converges absolutely.
\end{lemma}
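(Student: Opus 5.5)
The plan is to read the left-hand side of \eqref{eq:gamma-mean} as a Poisson integral in the upper half-plane and evaluate it at the point $ib$. Put
\[
 g(w)=\log\Gamma(c-iw)-\frac{i\pi}{2}w ,
\]
using the principal branch of $\log\Gamma$ on $\Rea(c-iw)>0$, so that $g$ is holomorphic for $\Ima w>-c$, in particular on the closed upper half-plane. On the real line, $\Rea g(x)=\log|\Gamma(c-ix)|+\frac\pi2\Ima x$-term; since $|\Gamma(c-ix)|=|\Gamma(c+ix)|$, the absolute value $|x|$ in the statement is not produced by a single holomorphic function. I will therefore treat the cases $x>0$ and $x<0$ together using the two functions $g_\pm(w)=\log\Gamma(c\mp iw)\mp\frac{i\pi}{2}w$, or better, subtract a harmonic function that matches $\frac\pi2|x|$.

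The cleaner route is as follows. The function $\frac\pi2|x|$ is the boundary value of the harmonic function $\frac{\pi}{2}\Rea\bigl(\frac{2}{\pi}\,w\cdot\frac{2}{\pi i}\log(\cdot)\bigr)$, but that function grows too fast for a Poisson representation. Instead I will use Stirling directly. By \cite[\S5.11]{DLMF}, uniformly in $\Rea z\ge c$,
\[
 \log\Gamma(z)=(z-\tfrac12)\log z-z+\tfrac12\log2\pi+O(1/|z|).
\]
This gives
\[
 \log|\Gamma(c+ix)|+\frac\pi2|x|=(c-\tfrac12)\log|x|+\tfrac12\log2\pi+O(1/|x|),
\]
so the integrand is $O(\log|x|/x^2)$ and absolute convergence follows at once.

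For the evaluation, write $F(b)$ for the left-hand side and differentiate twice in $c$ instead of working with $b$. The quantity $\partial_c^2\log|\Gamma(c+ix)|=\Rea\psi'(c+ix)$ is the real part of a function holomorphic and bounded on $\Ima x\le 0$ after the substitution $x\mapsto -w$, and it decays like $1/|w|$. The Poisson formula for bounded harmonic functions therefore gives
\[
 \frac b\pi\int\frac{\Rea\psi'(c+ix)}{b^2+x^2}\dd x=\psi'(c+b).
\]
Differentiating under the integral is justified by the uniform Stirling bounds. Integrating twice in $c$, the left side equals $\log\Gamma(c+b)+\alpha(b)c+\beta(b)$.

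The constants $\alpha,\beta$ are fixed by letting $c\to\infty$. In that limit Stirling applies on both sides with an error of $o(1)$, uniformly in the Poisson integral. Concretely, I will compute $\frac b\pi\int\frac{(c-\frac12)\frac12\log(c^2+x^2)-x\arctan(x/c)-c+\frac\pi2|x|}{b^2+x^2}\dd x$ explicitly, using the harmonic extensions of $\log|c+ix|$, namely $\log(c+b)$, and of $\Rea\bigl(-ix\log(c+ix)\bigr)$. The term $\frac\pi2|x|-x\arctan(x/c)=x\arctan(c/x)$ tends to a bounded term whose Poisson mean produces the $b(1-\log b)$ contribution, and matching this against $(c+b-\frac12)\log(c+b)-c-b$ yields $\alpha=0$ and $\beta=b(1-\log b)$.

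The main obstacle is this last constant-matching step, that is, handling the non-holomorphic piece $\frac\pi2|x|$ and the $x\arctan(c/x)$ term with errors uniform as $c\to\infty$. If direct computation becomes messy, I would instead verify the formula at the single test value $c\to\infty$ via the known Poisson mean of $\log|x|$ and of $|x|$-corrected $x\arg$ terms, or check it independently by the Malmstén/Binet integral for $\log\Gamma$.
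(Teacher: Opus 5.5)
Your argument is sound, but it takes a different route from the paper, and its decisive step is left unexecuted.

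\textbf{The paper's route.} The paper applies the Poisson formula once, on $\Rea z>0$ at $z=b$, to $\Rea H_c$ with $H_c(z)=\log\Gamma(c+z)-(z+c-\tfrac12)\Log z+z$.
\begin{itemize}
\item On the imaginary axis, $\Rea(-z\Log z)$ equals $\frac\pi2|x|$ exactly.
\item Stirling makes $H_c$ bounded at infinity.
\item The singularity at $0$ is only logarithmic.
\end{itemize}
The identity then follows from $\frac b\pi\int_{\R}\log|x|\,\dd x/(b^2+x^2)=\log b$. So the harmonic extension of $\frac\pi2|x|$ that you discarded as growing too fast is precisely the paper's device. Its growth is cancelled by the $-\frac\pi2|x|$ inside $\log|\Gamma(c+ix)|$.

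\textbf{Your route.} You remove $\frac\pi2|x|$ by differentiating twice in $c$. You use Poisson only for the bounded function $\psi'(c+z)$, and obtain that the left side equals $\log\Gamma(c+b)+\alpha c+\beta$. This part is fine: domination comes from $\Rea\psi(c+ix)=O(\log(2+|x|))$ and $|\psi'(c+ix)|\le\psi'(c)$, locally uniformly in $c$.

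\textbf{The constant-fixing step.} All the content now sits in the step you leave open, and your description of it is imprecise.
\begin{itemize}
\item The term $|x|\arctan(c/|x|)$ is bounded for fixed $c$, but its Poisson mean is not bounded in $c$. It is the boundary value of the bounded harmonic function $\Rea\bigl(z\Log(1+c/z)\bigr)$, so its mean is exactly $b\log(1+c/b)$.
\item This growing term supplies the $b\log(c+b)$ needed to complete $(c+b-\tfrac12)\log(c+b)$. The constant $b(1-\log b)$ is its $-b\log b$ plus the $+b$ coming from $-c$ versus $-(c+b)$ in Stirling for $\log\Gamma(c+b)$.
\item If you treated this contribution as $O(1)$, the matching would fail.
\item Likewise, $\Rea(-ix\Log(c+ix))$ alone has no Poisson extension. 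It must be paired with $\frac\pi2|x|$, as you eventually do.
\end{itemize}

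\textbf{A simplification.} Once you have the evaluation $b\log(1+c/b)$, the detour through $\partial_c^2$ and $c\to\infty$ is unnecessary. For fixed $c>0$, the Stirling remainder $R_c(z)=\log\Gamma(c+z)-(c+z-\tfrac12)\Log(c+z)+(c+z)-\tfrac12\log2\pi$ is bounded and holomorphic on $\Rea z\ge0$. The Poisson means $R_c(b)$, $(c-\tfrac12)\log(c+b)$, $b\log(1+c/b)$ and $-c+\tfrac12\log2\pi$ then sum exactly to the right-hand side.

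That direct version is the paper's proof with $\Log(c+z)$ in place of $\Log z$. It trades the logarithmic boundary singularity at $0$ for the explicit evaluation $b\log(1+c/b)$. Your two-step version adds a differentiation-under-the-integral argument and a uniform asymptotic, without gaining anything essential.
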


\begin{proof}
On the right half-plane choose analytic logarithms that are real on
the positive real axis, and set
\[
 H_c(z)=\log\Gamma(c+z)-(z+c-1/2)\Log z+z.
\]
Stirling's expansion \cite[\S5.11]{DLMF} gives
$H_c(z)=\tfrac12\log(2\pi)+O_c(|z|^{-1})$ at infinity in that
half-plane.  Near zero $H_c(z)=O_c(1+|\log|z||)$.
The Poisson formula therefore applies to $\Rea H_c$; it can also
be obtained by Cauchy's formula on right half-annuli, whose inner
arc contributes $O(\varepsilon|\log\varepsilon|)$ and whose outer
arc tends to zero after multiplication by the Poisson-Cauchy kernel.
On the imaginary axis,
\[
 \Rea H_c(ix)=\log|\Gamma(c+ix)|+\frac\pi2|x|
                  -(c-1/2)\log|x|.
\]
Using
$\frac b\pi\int_{\R}\log|x|\dd x/(b^2+x^2)=\log b$
in the Poisson formula at $z=b$ yields \eqref{eq:gamma-mean}.
Stirling's formula shows that its numerator is $O_c(\log|x|)$ at
infinity; the behavior near zero is integrable.
\end{proof}

Define, for $a\in(1/2,5/2)$,
\begin{equation}\label{eq:regularized-Xi}
 \mathcal R_a=\frac a\pi\int_{\R}
 \frac{\log|\Xi(t)|+\frac\pi4|t|}{a^2+t^2}\dd t.
\end{equation}
By \eqref{eq:xi-modulus}, Lemma~\ref{lem:gamma-mean} with
$b=a/2$, $c=1/4$, and
\[
 \frac a\pi\int_{\R}\frac{\log(t^2+1/4)}{a^2+t^2}\dd t
       =2\log(a+1/2),
\]
we obtain
\begin{align}\label{eq:regularized-Xi-expansion}
 \mathcal R_a={}&M_a-\log2-\tfrac14\log\pi
       +2\log(a+1/2)+\log\Gamma(1/4+a/2)\notag\\
       &\hspace{35mm}+\frac a2\bigl(1-\log(a/2)\bigr).
\end{align}
Substituting $M_a=c_a+\Delta_a$ and collecting the completion
factors gives the unconditional identity
\begin{equation}\label{eq:regularized-Xi-identity}
 \mathcal R_a=\log\Xi(ia)
       +\frac a2\left(1+\log\frac{2\pi}{a}\right)+\Delta_a.
\end{equation}
Absolute convergence follows from Theorem~\ref{thm:boundary}, the
lemma, and the elementary logarithmic term.  Without the counterterm
$\pi|t|/4$, the corresponding Poisson integral diverges to $-\infty$.
Thus applying an unqualified harmonic mean-value formula directly
to $\log|\Xi|$ would lose a contribution from infinity.

\subsection{The quartic weight and a positive zero correction}

Put
\begin{equation}\label{eq:quartic-weight}
 w(t)=\frac{3}{\pi(1+t^2)(4+t^2)}
     =\frac1\pi\left(\frac1{1+t^2}-\frac1{4+t^2}\right),
 \qquad \int_{\R}w(t)\dd t=\frac12,
\end{equation}
and define
\begin{equation}\label{eq:K-Xi}
 \mathcal K=\int_{\R}w(t)\log|\Xi(t)|\dd t,
 \qquad \mathcal G=\Delta_1-\frac12\Delta_2.
\end{equation}
The quartic decay makes $\mathcal K$ absolutely convergent:
the gamma logarithm grows at most linearly, while the zeta logarithm
is integrable with the larger Poisson weight by
Theorem~\ref{thm:boundary}.

\begin{proposition}[The completed-function identity]\label{prop:Xi-gap}
Unconditionally,
\begin{equation}\label{eq:K-gap}
 \mathcal K=\log\Xi(i)-\frac12\log\Xi(2i)+\mathcal G,
 \qquad 0\le\mathcal G<\infty.
\end{equation}
Moreover,
\begin{equation}\label{eq:K-rh}
 \RH\quad\Longleftrightarrow\quad \mathcal G=0
 \quad\Longleftrightarrow\quad
 \mathcal K=\log\Xi(i)-\frac12\log\Xi(2i).
\end{equation}
\end{proposition}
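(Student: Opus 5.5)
The plan is to take the combination $\mathcal R_1-\tfrac12\mathcal R_2$ of the unconditional identity \eqref{eq:regularized-Xi-identity} at $a=1$ and $a=2$. By \eqref{eq:quartic-weight},
\[
 w(t)=\frac{1}{\pi}\frac{1}{1+t^2}-\frac12\cdot\frac{2}{\pi}\frac{1}{4+t^2}.
\]
This is exactly the kernel $\frac{a}{\pi}(a^2+t^2)^{-1}$ at $a=1$ minus one half of the same kernel at $a=2$. Hence, formally, $\mathcal R_1-\tfrac12\mathcal R_2$ equals the integral of $w(t)\bigl(\log|\Xi(t)|+\tfrac\pi4|t|\bigr)$.

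The counterterms cancel pointwise after integration. Since $\frac{1}{1+t^2}-\frac{1}{4+t^2}=\frac{3}{(1+t^2)(4+t^2)}$, the function $w(t)|t|$ is integrable. Each of $\mathcal R_1,\mathcal R_2$ is an absolutely convergent integral, so we may split every integrand into its $\log|\Xi|$ part and its $|t|$ part, provided the pieces are integrable separately against $w$. This holds for $\log|\Xi|$ by the justification given after \eqref{eq:K-Xi}, and it is elementary for $|t|$.

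The linear term contributes $\frac\pi4\int w(t)|t|\dd t$, which is computed as
\[
 \frac14\int_{\R}|t|\left(\frac1{1+t^2}-\frac1{4+t^2}\right)\dd t
 =\frac12\Bigl[\tfrac12\log\frac{1+t^2}{4+t^2}\Bigr]_0^\infty
 =\frac12\log 2 .
\]
Therefore
\[
 \mathcal K=\mathcal R_1-\tfrac12\mathcal R_2-\tfrac12\log2 .
\]

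Next, insert \eqref{eq:regularized-Xi-identity}. The constant terms are
\[
 \tfrac12\bigl(1+\log 2\pi\bigr)-\tfrac12\bigl(1+\log\pi\bigr)=\tfrac12\log2 ,
\]
which cancels the $-\tfrac12\log2$ exactly. What remains is
\[
 \mathcal K=\log\Xi(i)-\tfrac12\log\Xi(2i)+\Delta_1-\tfrac12\Delta_2 .
\]
This proves the first identity in \eqref{eq:K-gap}. Finiteness of $\mathcal G$ is immediate from Theorem~\ref{thm:parameter}, since each $\Delta_a$ converges.

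The main obstacle is the positivity $\mathcal G\ge0$. A mere combination of two nonnegative sums with a negative coefficient is not obviously nonnegative, so I would argue termwise. Write $\rho=1/2-\delta+i\gamma$ with $0<\delta<1/2$, and use $q_a$ from \eqref{eq:qa}. It suffices to show $q_1(\rho)\ge\tfrac12 q_2(\rho)$, with strict inequality. Both series converge absolutely, so they may be subtracted termwise.

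For the termwise inequality, set $g(a)=\log\frac{\gamma^2+(a+\delta)^2}{\gamma^2+(a-\delta)^2}$. I would use the representation
\[
 g(a)=\int_{-\delta}^{\delta}\frac{2(a+x)}{\gamma^2+(a+x)^2}\dd x ,
\]
and compare the integrands at $a=1$ and $a=2$ pointwise. The required inequality is
\[
 \frac{2(1+x)}{\gamma^2+(1+x)^2}>\frac12\cdot\frac{2(2+x)}{\gamma^2+(2+x)^2}.
\]
This reduces to a polynomial inequality in $y=1+x\in(1/2,3/2)$ and $\gamma^2$:
\[
 4y\bigl(\gamma^2+(y+1)^2\bigr)>2(y+1)\bigl(\gamma^2+y^2\bigr).
\]
That is,
\[
 \gamma^2(2y-2)+2y(y+1)(y+2)>0 .
\]
This fails for $y<1$ when $\gamma$ is large, so a pointwise comparison is not enough. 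Instead I would integrate over the symmetric interval, pairing $x$ with $-x$. The $\gamma^2$ coefficient then sums to zero to first order, and I would expect a positive remainder, although this needs a check. If pairing fails, my fallback is to expand in $1/\gamma^2$ and treat small $|\gamma|$ (below the first zero height, about $14$) as vacuous.

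Finally, for \eqref{eq:K-rh}: under RH both $\Delta_a$ vanish, so $\mathcal G=0$. Conversely, strict termwise positivity means any off-line zero gives $\mathcal G>0$. The last equivalence is then just the first identity in \eqref{eq:K-gap}.
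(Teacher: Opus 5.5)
\textbf{There is a gap: the strict termwise inequality $q_1>\tfrac12 q_2$ is not proved.}

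Your derivation of the identity in \eqref{eq:K-gap} matches the paper's. Forming $\mathcal R_1-\tfrac12\mathcal R_2$ with kernel $w$ is correct. So is evaluating $\tfrac\pi4\int_{\R}w(t)|t|\dd t=\tfrac12\log2$, and cancelling it against the difference $\tfrac12\log2$ of the constants in \eqref{eq:regularized-Xi-identity}. Finiteness of $\mathcal G$ is also fine.

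The gap is the step you flag yourself: the strict termwise inequality $q_1(\varrho)>\tfrac12 q_2(\varrho)$. Both $\mathcal G\ge0$ and the implication $\mathcal G=0\Rightarrow\RH$ rest entirely on it. Nonnegativity of $\Delta_1$ and $\Delta_2$ separately says nothing about $\Delta_1-\tfrac12\Delta_2$. You show that the pointwise comparison of integrands fails, and then only \emph{expect} the symmetric pairing to work.

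The supporting heuristic is not valid. The cross-multiplied inequalities at $x$ and $-x$ were cleared of different positive denominators. Cancellation of their $\gamma^2$-coefficients therefore says nothing about the sum of the original integrands. The fallback is not a proof either. An expansion in $1/\gamma^2$ is only asymptotic. Without an explicit uniform remainder bound, knowing that no zeros lie below height $14$ does not give the inequality for every hypothetical off-line zero.

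\textbf{The pairing does close, and gives a different argument from the paper's.} Put $h_a(x)=2(a+x)/(\gamma^2+(a+x)^2)$, $G=\gamma^2$ and $u=x^2$. Then
\[
 \phi_a(x):=h_a(x)+h_a(-x)=\frac{4a(G+a^2-u)}{P_a},
 \qquad P_a=(G+a^2-u)^2+4Gu,
\]
and a short expansion gives
\[
 (G+1-u)P_2-(G+4-u)P_1=3\bigl[G^2+(5-6u)G+(1-u)(4-u)\bigr]>0
 \qquad(0\le u<\tfrac14).
\]
Hence $\phi_1>\tfrac12\phi_2$ on $(0,\delta)$, and
\[
 q_1-\tfrac12q_2=\tfrac12\int_0^\delta\bigl(\phi_1-\tfrac12\phi_2\bigr)\dd x>0 .
\]
Since $\phi_a(\delta)=\partial_\delta(2q_a)$, this is monotonicity in $\delta$, starting from $0$ at $\delta=0$.

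The paper argues differently. It writes $q_a=\operatorname{arctanh}x_a$ with $x_a=2a\delta/(a^2+\delta^2+\gamma^2)$ and proves $aq_a'<x_a<\operatorname{arctanh}x_a=q_a$. It concludes that $q_a/a$ is strictly decreasing in $a$. That yields $q_a/a>q_b/b$ for all $\delta<a<b$, not just for $a=1$, $b=2$. Your completed route is a purely algebraic check tailored to this one pair.

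For the converse in \eqref{eq:K-rh}, also state explicitly that a zero to the right of the critical line reflects to one with $\Rea\varrho<1/2$.
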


\begin{proof}
Since $\int_{\R}w(t)|t|\dd t=2\log2/\pi$,
\[
 \mathcal R_1-\tfrac12\mathcal R_2
       =\mathcal K+\tfrac12\log2.
\]
The constants in \eqref{eq:regularized-Xi-identity} have the same
difference $\tfrac12\log2$, proving \eqref{eq:K-gap}.

It remains to check positivity of the difference of zero sums;
positivity of $\Delta_1$ and $\Delta_2$ separately would not suffice.
For a zero $\varrho=1/2-\delta+i\gamma$, $0<\delta<1/2$, write
\[
 q_a=\frac12\log\frac{\gamma^2+(a+\delta)^2}
                         {\gamma^2+(a-\delta)^2}
     =\operatorname{arctanh}x_a,
 \qquad x_a=\frac{2a\delta}{a^2+\delta^2+\gamma^2}.
\]
For $a>\delta$ we have $0<x_a<1$.  Direct differentiation gives
\[
 \frac{a q_a'}{x_a}
 =\frac{\delta^2+\gamma^2-a^2}
        {(a^2+\delta^2+\gamma^2)(1-x_a^2)}<1,
\]
where the strict inequality is equivalent to
$a^2+\gamma^2>\delta^2$.  Thus
$a q_a'<x_a<\operatorname{arctanh}x_a=q_a$, so $q_a/a$ is
strictly decreasing.  In particular $q_1-q_2/2>0$.
The series for $\Delta_1$ and $\Delta_2$ converge, and hence
\begin{equation}\label{eq:positive-quartic-gap}
 \mathcal G=
 \sum_{\Rea\varrho<1/2}m(\varrho)
             \left(q_1(\varrho)-\frac12q_2(\varrho)\right)
\end{equation}
is a convergent sum of strictly positive contributions.
It vanishes precisely when there are no left-hand nontrivial zeros.
Reflection in the critical line completes the proof.
\end{proof}

\section{A relative-entropy reformulation and lossless data processing}
\label{sec:kl}

In this section $\rho$ denotes a probability measure; zeros, when
needed, are denoted by $\varrho$.  The constant $D$ introduced below
is distinct from the finite-radius function $D(r)=I(r)-A(r)$.
The construction packages Proposition~\ref{prop:Xi-gap} as equality
in a bound for relative entropy.  The data-processing step is exact
without RH; it does not supply the zero-theoretic equality on its own.

The use of probability distributions associated with $\xi$ and $\Xi$ predates
the present construction; see in particular
\cite{BianePitmanYor2001,Nakamura2015,NakamuraSuzuki2023,KonstantopoulosPatieSarkar2024}.
Accordingly, the result below is presented as a relative-entropy reformulation
of the known analytic criterion, without a claim that probabilistic
formulations of RH are new in themselves.

\subsection{The Fourier kernel and the two probability spaces}

Riemann's Fourier representation, in the normalization recorded in
\cite[eqs.~(1.1)--(1.2)]{LagariasMontague2011}, is
\begin{equation}\label{eq:Phi-Fourier}
 \Xi(t)=2\int_0^\infty\Phi(u)\cos(tu)\dd u,
\end{equation}
where
\begin{equation}\label{eq:Phi-definition}
 \Phi(u)=\sum_{n=1}^\infty
  \left(4\pi^2n^4e^{9u/2}-6\pi n^2e^{5u/2}\right)
             e^{-\pi n^2e^{2u}},\qquad u\ge0.
\end{equation}
Each summand is positive, since it factors as
$2\pi n^2e^{5u/2}(2\pi n^2e^{2u}-3)e^{-\pi n^2e^{2u}}$.
The kernel has faster-than-exponential decay.  It follows that
\begin{equation}\label{eq:p-f}
 p(u)=\frac{\Phi(|u|)}{\Xi(0)},\qquad
 f(t)=\int_{\R}e^{itu}p(u)\dd u=\frac{\Xi(t)}{\Xi(0)}
\end{equation}
defines a symmetric probability density $p$ and its real-valued
characteristic function.  In particular $\Xi(0)>0$ and $|f(t)|\le1$.

Define a second probability measure on the frequency axis by
\begin{equation}\label{eq:rho-probability}
 \rho(\dd t)=2w(t)\dd t
       =\frac{6\dd t}{\pi(1+t^2)(4+t^2)}.
\end{equation}
For $T\sim\rho$, set
\begin{equation}\label{eq:Z-mu}
 Z(t)=f(t)^2=\left|\frac{\Xi(t)}{\Xi(0)}\right|^2,
 \qquad \mu=Z_\#\rho,
 \qquad M=\int_{[0,1]}z\,\mu(\dd z).
\end{equation}
Here $Z_\#\rho$ is the pushforward: $\mu(B)=\rho(Z^{-1}(B))$
for Borel $B\subset[0,1]$.  Thus $\mu$ is the law of $Z(T)$,
not the law of $T$.  For any integrable test function $H$,
\begin{equation}\label{eq:pushforward-expectation}
 \int_{[0,1]}H(z)\,\mu(\dd z)
       =\int_{\R}H(Z(t))\,\rho(\dd t).
\end{equation}
The strict inequality $|f(t)|<1$ for $t\ne0$ follows because $p$
is a positive density and $e^{itX}$ cannot be almost surely constant
when $t\ne0$.  The real zeros of the nonzero entire function $\Xi$
are discrete.  Hence $\mu(\{0\})=\mu(\{1\})=0$ and $0<M<1$.
By \eqref{eq:K-Xi} and \eqref{eq:pushforward-expectation},
\begin{equation}\label{eq:logZ-K}
 \int_{[0,1]}\log z\,\mu(\dd z)
    =4\mathcal K-2\log\Xi(0),
 \qquad \int_{[0,1]}|\log z|\,\mu(\dd z)<\infty.
\end{equation}
Values assigned to $\log Z$ on the zero set do not affect any integral.

The size-biased law on $[0,1]$ and its unreduced version on $\R$ are
\begin{equation}\label{eq:nu-rhoZ}
 \nu(\dd z)=\frac zM\,\mu(\dd z),
 \qquad \rho_Z(\dd t)=\frac{Z(t)}M\,\rho(\dd t).
\end{equation}
Both are probability measures, $Z_\#\rho_Z=\nu$, and the respective
pairs are mutually absolutely continuous.  For instance, if
$B\mid Z=z$ has Bernoulli success probability $z$, Bayes' formula
gives $\mathcal L(Z\mid B=1)=\nu$.  This interpretation does not
identify $D_{\rm KL}(\mu\|\nu)$ with ordinary mutual information.

\subsection{Equality in the data-processing inequality}

For probability measures $P,Q$, define
\[
 D_{\rm KL}(P\|Q)=\int\log\frac{\dd P}{\dd Q}\dd P
\]
when $P\ll Q$, and set it to $+\infty$ otherwise.  All logarithms
are natural.  The data-processing inequality (DPI) states that
$D_{\rm KL}(PK\|QK)\le D_{\rm KL}(P\|Q)$ for a common Markov
kernel $K$; deterministic pushforwards are a special case.
Equality is associated with sufficiency of the processed statistic;
see \cite[Chapters~2--3]{PolyanskiyWu2016}.

\begin{proposition}[Lossless reduction to $Z$]\label{prop:DPI}
For the measures in \eqref{eq:Z-mu} and \eqref{eq:nu-rhoZ},
\begin{equation}\label{eq:DPI-equality}
 D_{\rm KL}(\rho\|\rho_Z)
 =D_{\rm KL}(\mu\|\nu)
 =\log M-\int_{[0,1]}\log z\,\mu(\dd z)<\infty.
\end{equation}
The map $t\mapsto Z(t)$ is a sufficient statistic for the pair
$(\rho,\rho_Z)$, regardless of RH.
\end{proposition}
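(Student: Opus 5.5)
The plan is to compute both relative entropies directly from their Radon--Nikodym derivatives, and to read sufficiency off the fact that the likelihood ratio factors through $Z$.

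First, mutual absolute continuity. By construction $\dd\rho_Z/\dd\rho=Z(t)/M$. This density vanishes only on the discrete real zero set of $\Xi$, which is $\rho$-null. Hence $\rho\ll\rho_Z$, with
\[
 \frac{\dd\rho}{\dd\rho_Z}(t)=\frac{M}{Z(t)}
\]
$\rho$-almost everywhere. Similarly $\dd\nu/\dd\mu=z/M$, and $\mu(\{0\})=0$, so $\mu\ll\nu$ with $\dd\mu/\dd\nu=M/z$ $\mu$-almost everywhere. Both facts rest on the observations already recorded after \eqref{eq:Z-mu}.

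Second, the two evaluations. On the frequency side,
\[
 D_{\rm KL}(\rho\|\rho_Z)=\int_{\R}\log\frac{M}{Z(t)}\,\rho(\dd t)
 =\log M-\int_{\R}\log Z(t)\,\rho(\dd t).
\]
By \eqref{eq:pushforward-expectation} with $H=\log$, the last integral equals $\int_{[0,1]}\log z\,\mu(\dd z)$. Doing the same computation on $[0,1]$ with $\dd\mu/\dd\nu=M/z$ gives $D_{\rm KL}(\mu\|\nu)=\log M-\int\log z\,\mu(\dd z)$ directly. So the two quantities coincide.

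Third, finiteness. This is the only point requiring care, since $\log(\dd P/\dd Q)$ must be $P$-integrable for the splitting of the integral to be legitimate rather than formal. The justification is \eqref{eq:logZ-K}: there $\int|\log z|\,\mu(\dd z)<\infty$, which is ultimately the absolute convergence of $\mathcal K$ from Proposition~\ref{prop:Xi-gap}. Together with $0<M<1$, this makes $\log M-\log z$ $\mu$-integrable, so the value is a finite real number. It is also nonnegative by Gibbs' inequality, consistent with $\log M\ge\log\int z\,\dd\mu\ge\int\log z\,\dd\mu$ via Jensen.

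Finally, sufficiency. The likelihood ratio $\dd\rho_Z/\dd\rho=Z/M$ is a measurable function of $Z$ alone. By the Fisher--Neyman factorization criterion, $Z$ is therefore sufficient for the pair $(\rho,\rho_Z)$. Equivalently, the conditional law of $T$ given $Z$ is the same under both measures: for measurable $A$ and $B$,
\[
 \rho_Z\bigl(A\cap Z^{-1}B\bigr)=\int_{Z^{-1}B}\mathbf 1_A\,\frac{Z}{M}\,\dd\rho ,
\]
which factors through the $\rho$-conditional expectation given $\sigma(Z)$. This is consistent with the DPI equality case in \cite{PolyanskiyWu2016}. No zero information enters anywhere, so the statement holds regardless of RH.
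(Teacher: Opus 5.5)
Your proof is correct, but it reaches the equality by a different mechanism than the paper. The paper never evaluates $D_{\rm KL}(\rho\|\rho_Z)$ directly. It obtains $D_{\rm KL}(\mu\|\nu)\le D_{\rm KL}(\rho\|\rho_Z)$ from DPI for the deterministic map $Z$. It gets the reverse inequality by applying DPI to a regular conditional distribution $\mathsf R(z,\dd t)$ of $T$ given $Z$ under $\rho$, checking that this one kernel satisfies both $\mu\mathsf R=\rho$ and $\nu\mathsf R=\rho_Z$ because $Z/M$ is constant on fibers. Only then does it compute $D_{\rm KL}(\mu\|\nu)=\log M-\int\log z\,\mu(\dd z)$ from $\dd\mu/\dd\nu=M/z$.

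You instead transport the integrand $\log(M/Z(t))$ through the change-of-variables formula \eqref{eq:pushforward-expectation}, and you get sufficiency separately from Fisher--Neyman. Both arguments rest on the same fact, namely that the likelihood ratio is a function of $Z$.

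Your route is more elementary, since it needs no disintegration and no DPI for general kernels. In fact the equality of the two divergences needs no integrability at all: $\log(M/Z)\ge\log M$ is bounded below, and the pushforward identity holds for such integrands. The bound \eqref{eq:logZ-K} is needed only for the explicit formula and for finiteness. The paper's route gives the literal ``lossless data processing'' statement it wants, because it exhibits the recovery kernel that makes both DPI inequalities tight; this is the sufficiency claim in operational form.

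One small citation correction: the absolute convergence of $\mathcal K$ is asserted in the paragraph following \eqref{eq:K-Xi}, using Theorem~\ref{thm:boundary}. It is not part of Proposition~\ref{prop:Xi-gap}.
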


\begin{proof}
Apply DPI to the deterministic map $Z$:
\begin{equation}\label{eq:DPI-forward}
 D_{\rm KL}(\mu\|\nu)\le D_{\rm KL}(\rho\|\rho_Z).
\end{equation}
To prove the reverse inequality by DPI as well, let
$\mathsf R(z,\dd t)$ be a regular conditional distribution of $T$
given $Z(T)=z$ under $\rho$.  Such a kernel exists since these are
standard Borel spaces.  It reconstructs $\rho$ from $\mu$:
$\mu\mathsf R=\rho$.  The likelihood ratio
\[
 \frac{\dd\rho_Z}{\dd\rho}(t)=\frac{Z(t)}M
\]
is constant on every fiber of $Z$.  Consequently the same
conditional kernel reconstructs $\rho_Z$ from $\nu$: for Borel
$A\subset\R$,
\[
 (\nu\mathsf R)(A)
 =\frac1M\int z\,\mathsf R(z,A)\,\mu(\dd z)
 =\frac1M\int_A Z(t)\,\rho(\dd t)=\rho_Z(A).
\]
Applying DPI to $\mathsf R$ gives
\begin{equation}\label{eq:DPI-backward}
 D_{\rm KL}(\rho\|\rho_Z)
 =D_{\rm KL}(\mu\mathsf R\|\nu\mathsf R)
 \le D_{\rm KL}(\mu\|\nu).
\end{equation}
Together, \eqref{eq:DPI-forward} and \eqref{eq:DPI-backward}
prove equality.  Finally, $\dd\mu/\dd\nu=M/z$ almost everywhere,
so direct integration gives \eqref{eq:DPI-equality}; finiteness
follows from \eqref{eq:logZ-K}.
\end{proof}

\subsection{RH as saturation of a relative-entropy bound}

\begin{theorem}[Relative-entropy saturation form]\label{thm:KL}
Define
\begin{equation}\label{eq:KL-target-D}
 D=\log\left(M\frac{\Xi(0)^2\Xi(2i)^2}{\Xi(i)^4}\right).
\end{equation}
Then, without assuming RH,
\begin{equation}\label{eq:KL-gap}
 D-D_{\rm KL}(\mu\|\nu)
 =D-D_{\rm KL}(\rho\|\rho_Z)
 =4\mathcal G
 =4\left(\Delta_1-\frac12\Delta_2\right)\ge0.
\end{equation}
In particular,
\begin{equation}\label{eq:KL-bounds}
 0\le D_{\rm KL}(\mu\|\nu)\le D,
\end{equation}
and
\begin{equation}\label{eq:KL-RH}
 \boxed{\RH\quad\Longleftrightarrow\quad
        D_{\rm KL}(\mu\|\nu)=D.}
\end{equation}
Equivalently, the same equality holds for the unreduced pair
$(\rho,\rho_Z)$.
\end{theorem}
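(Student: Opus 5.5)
The plan is to combine Proposition~\ref{prop:DPI} with Proposition~\ref{prop:Xi-gap} through the logarithmic moment \eqref{eq:logZ-K}. The theorem is essentially bookkeeping once these two results are in place.

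First, I will compute $D_{\rm KL}(\mu\|\nu)$ explicitly. By Proposition~\ref{prop:DPI},
\[
 D_{\rm KL}(\mu\|\nu)=D_{\rm KL}(\rho\|\rho_Z)=\log M-\int_{[0,1]}\log z\,\mu(\dd z).
\]
By \eqref{eq:logZ-K}, the integral equals $4\mathcal K-2\log\Xi(0)$. Hence
\[
 D_{\rm KL}(\mu\|\nu)=\log M+2\log\Xi(0)-4\mathcal K.
\]
This step also confirms that the quantity is finite, since \eqref{eq:logZ-K} gives absolute integrability of $\log z$.

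Next, I will insert the unconditional identity \eqref{eq:K-gap}, namely $\mathcal K=\log\Xi(i)-\tfrac12\log\Xi(2i)+\mathcal G$. The logarithms are legitimate because $\Xi(i),\Xi(2i)>0$ by \eqref{eq:xi-imaginary} and $\Xi(0)>0$. This gives
\[
 D_{\rm KL}(\mu\|\nu)=\log M+2\log\Xi(0)-4\log\Xi(i)+2\log\Xi(2i)-4\mathcal G.
\]
The first four terms are exactly the logarithm in \eqref{eq:KL-target-D}, so $D-D_{\rm KL}(\mu\|\nu)=4\mathcal G$. Since $\mathcal G=\Delta_1-\tfrac12\Delta_2$ by definition \eqref{eq:K-Xi}, this proves \eqref{eq:KL-gap}, and the equality for the unreduced pair comes from the first equality of \eqref{eq:DPI-equality}.

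For \eqref{eq:KL-bounds}, the lower bound is nonnegativity of relative entropy; Gibbs' inequality applies because $\mu\ll\nu$. The upper bound follows from $\mathcal G\ge0$ in Proposition~\ref{prop:Xi-gap}. Finally, \eqref{eq:KL-RH} is the statement $\mathcal G=0\Leftrightarrow\RH$ from \eqref{eq:K-rh}.

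I expect no step to be a real obstacle. The only points needing care are a sign check in the coefficients $2$, $-4$, $+2$ on the logarithms of $\Xi$, which must match \eqref{eq:KL-target-D}, and confirming that \eqref{eq:logZ-K} correctly accounts for the normalization $\rho=2w\,\dd t$ (the factor $4=2\cdot2$ from the square in $Z$ and the density $2w$).
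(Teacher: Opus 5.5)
Your proposal is correct and follows the paper's own proof step for step. Proposition~\ref{prop:DPI} with \eqref{eq:logZ-K} gives $D_{\rm KL}(\mu\|\nu)=\log M+2\log\Xi(0)-4\mathcal K$, substituting \eqref{eq:K-gap} yields the gap $4\mathcal G$, and the bounds and the $\RH$ equivalence follow from nonnegativity of relative entropy and from $\mathcal G\ge0$ with equality exactly under $\RH$; your normalization check (the $4$ coming from the square in $Z$ and from $\rho=2w\,\dd t$ with $\int w=1/2$) is also right.
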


\begin{proof}
Combining Proposition~\ref{prop:DPI} with \eqref{eq:logZ-K} gives
\[
 D_{\rm KL}(\mu\|\nu)=\log M+2\log\Xi(0)-4\mathcal K.
\]
Substitute \eqref{eq:K-gap} to obtain \eqref{eq:KL-gap}.
Nonnegativity of relative entropy and
Proposition~\ref{prop:Xi-gap} give \eqref{eq:KL-bounds}.
By \eqref{eq:positive-quartic-gap}, $\mathcal G=0$ precisely under
RH, proving the equivalence.
\end{proof}

\begin{remark}[What DPI does and does not prove]\label{rem:DPI-scope}
The equality in \eqref{eq:DPI-equality} is unconditional: no
information about the likelihood ratio is lost by passing to $Z$.
The RH content lies in the additional assertion that this common
divergence equals the particular constant $D$.  That assertion
comes from the zero correction in Proposition~\ref{prop:Xi-gap},
not from equality in DPI.  Also, \eqref{eq:KL-bounds} is a bound
for these particular Riemann-derived measures; $D$ is not asserted
to be an upper bound over all probability pairs.  The theorem is
an information-theoretic reformulation of the established
logarithmic-integral family, not a proof of RH or an assertion of
bibliographic priority.
\end{remark}

\subsection{Variational form and the Fourier-kernel constant}
\label{sec:DV}

Set
\begin{equation}\label{eq:E-target}
 E=D-\log M=2\log\frac{\Xi(0)\Xi(2i)}{\Xi(i)^2}.
\end{equation}
Let $\mathcal B_b([0,1])$ denote the bounded real Borel functions.
For $h\in\mathcal B_b([0,1])$ put
\begin{equation}\label{eq:DV-functional}
 \mathcal J_\mu(h)=\int h(z)\,\mu(\dd z)
             -\log\int ze^{h(z)}\,\mu(\dd z).
\end{equation}
The Donsker--Varadhan variational formula
\cite[Theorem~3.5]{PolyanskiyWu2016}, applied to
$\nu(\dd z)=z\mu(\dd z)/M$, yields
\begin{equation}\label{eq:DV-sup}
 \sup_{h\in\mathcal B_b([0,1])}\mathcal J_\mu(h)
 =D_{\rm KL}(\mu\|\nu)-\log M
 =-\int\log z\,\mu(\dd z)=E-4\mathcal G.
\end{equation}
For completeness this instance can be checked directly.  Jensen's
inequality for $\log$ gives
$\mathcal J_\mu(h)\le-\int\log z\,\mu(\dd z)$.
For $L>0$ the bounded functions
$h_L(z)=\min\{-\log z,L\}$, with $h_L(0)=L$, satisfy
\[
 \int h_L\dd\mu\longrightarrow-\int\log z\dd\mu,
 \qquad
 \int ze^{h_L(z)}\,\mu(\dd z)\longrightarrow1.
\]
Monotone and dominated convergence prove \eqref{eq:DV-sup}.
In a larger admissible class the optimizer is
$h(z)=-\log z+$constant; it is not a bounded optimizer.
Consequently,
\begin{equation}\label{eq:DV-RH}
 \RH\quad\Longleftrightarrow\quad
 \sup_{h\in\mathcal B_b([0,1])}\mathcal J_\mu(h)=E.
\end{equation}
The equality of this supremum with $-\int\log z\dd\mu$ is true
without RH; only its equality with $E$ is the criterion.

Both sides can be expressed through the same positive Fourier
kernel.  Write
\[
 H_j=\int_0^\infty\Phi(u)\cosh(ju)\dd u\quad(j=0,1,2),
 \qquad
 R_\Phi(t)=\frac{\int_0^\infty\Phi(u)\cos(tu)\dd u}{H_0}.
\]
Then $Z(t)=R_\Phi(t)^2$, $\Xi(ji)=2H_j$, and
\begin{equation}\label{eq:E-Phi}
 E=2\log\frac{H_0H_2}{H_1^2}
 =2\log\frac{
  \left(\int_0^\infty\Phi(u)\dd u\right)
  \left(\int_0^\infty\Phi(u)\cosh(2u)\dd u\right)}{
  \left(\int_0^\infty\Phi(u)\cosh u\dd u\right)^2}.
\end{equation}
In particular, \eqref{eq:DV-RH} becomes
\begin{equation}\label{eq:DV-Phi}
 \RH\quad\Longleftrightarrow\quad
 \sup_{h\in\mathcal B_b([0,1])}
 \left\{\int h(R_\Phi(t)^2)\,\rho(\dd t)
 -\log\int R_\Phi(t)^2e^{h(R_\Phi(t)^2)}\,\rho(\dd t)\right\}
 =2\log\frac{H_0H_2}{H_1^2}.
\end{equation}
If $X$ has density $p$ from \eqref{eq:p-f}, symmetry and the rapid
decay of $p$ give $H_j/H_0=\mathbb E_p[e^{jX}]$.
Thus $H_0H_2/H_1^2$ is the normalized second moment
$\mathbb E_p[e^{2X}]/\mathbb E_p[e^X]^2$.
The laws of $X$ and $Z(T)$ are different: the latter uses the
characteristic function of the former at the random frequency $T$.

\subsection{Moment formulation and the sequences \texorpdfstring{$U_N,V_N$}{U(N) and V(N)}}
\label{sec:KL-moments}

For $k\ge0$ and $n\ge1$ define
\begin{equation}\label{eq:KL-moments}
 m_k=\int_{\R}w(t)Z(t)^k\dd t=\frac12\int z^k\,\mu(\dd z),
 \qquad
 A_n=\frac12\int(1-z)^n\,\mu(\dd z).
\end{equation}
In particular, $m_0=1/2$, $M=2m_1$, and
\[
 A_n=\sum_{k=0}^n(-1)^k\binom nk m_k\ge0.
\]
Here the sequence $A_n$ is distinct from the finite-radius function
$A(r)$.  The expansion $-\log z=\sum_{n\ge1}(1-z)^n/n$ and
monotone convergence give
\begin{equation}\label{eq:KL-series}
 D_{\rm KL}(\mu\|\nu)
 =\log(2m_1)+2\sum_{n=1}^\infty\frac{A_n}{n},
 \qquad
 \sum_{n=1}^\infty\frac{A_n}{n}<\infty.
\end{equation}
Equivalently,
\begin{equation}\label{eq:series-RH}
 \RH\quad\Longleftrightarrow\quad
 \sum_{n=1}^\infty\frac{A_n}{n}
       =\log\frac{\Xi(0)\Xi(2i)}{\Xi(i)^2}.
\end{equation}
The left side is at most the right side unconditionally, with gap
$2\mathcal G$.

To distinguish the two partial-sum normalizations, set
\begin{equation}\label{eq:UV-relation}
 V_N=-\frac12\sum_{n=1}^N\frac{A_n}{n},
 \qquad U_N=\frac12\log\Xi(0)+V_N.
\end{equation}
Then
\[
 U_N\downarrow\mathcal K,
 \qquad
 V_N\downarrow\mathcal K-\frac12\log\Xi(0).
\]
The symbol $V_N$ denotes this sequence and should not be confused
with the analytic function $V(z)$ in Section~\ref{sec:main}.

The moments also retain a direct convolution interpretation in
terms of $p$.  With
\[
 g(u)=e^{-|u|}-\frac12e^{-2|u|},
 \qquad
 \widehat g(t)=\int_{\R}g(u)e^{-itu}\dd u
             =\frac{6}{(1+t^2)(4+t^2)},
\]
Fourier inversion and \eqref{eq:p-f} give, for $k\ge1$,
\begin{equation}\label{eq:mk-convolution}
 m_k=\int_{\R}g(u)\,p^{*2k}(u)\dd u.
\end{equation}
For $k=0$ use the point mass at zero instead of a convolution
density.  These identities give concrete representations of the
moments; they do not by themselves prove the equality in
\eqref{eq:series-RH}.

\section{Concluding remarks}

The finite-radius construction makes one elementary feature unusually
explicit: in the normalization centered at $s=-1/2$, the Jensen product of
the mapped trivial zeros telescopes.  The parameterized version replaces this
by a gamma quotient.  In either case the divergent part of the circular mean
is completely explicit, while the remaining defect is a finite positive zero
sum.  This is a left-half-plane presentation of the established
Jensen--Hardy mechanism rather than a new boundary criterion.

The relative-entropy formulation packages the same obstruction in a different
language.  For the Riemann-derived pair $(\mu,\nu)$, data processing is
lossless because $Z$ retains the likelihood ratio, and
\[
 D-D_{\rm KL}(\mu\|\nu)=4\mathcal G\ge0.
\]
Thus RH is equivalent to saturation of this particular upper bound.  The
Donsker--Varadhan and moment representations clarify the probabilistic
structure but, without an independent lower bound reaching $D$, do not
constitute additional progress toward proving RH.

The potentially useful content of these reformulations is therefore
structural: the finite-radius formulas isolate the trivial-zero contribution,
and the completed-function identity places the remaining equality problem in
a relative-entropy setting connected with Riemann's positive Fourier kernel.
Whether that viewpoint yields estimates not already implicit in the
zero-theoretic formulation is left open.

\section*{Disclosure of AI assistance}
OpenAI ChatGPT was used as an assistance tool for drafting, algebraic checks, and locating related literature.
The author is responsible for the statements, proofs, citations, and conclusions of the manuscript.

\clearpage
\bibliographystyle{plain}
\bibliography{references}

\end{document}